\definecolor{darkblue}{rgb}{0.13,0.13,0.39}
\tikzset{fontscale/.style = {font=\relsize{#1}}
    }
\newcommand{\one}{{\mathsf 1}}
\newcommand{\R}{\mathbb R}
\newcommand{\1}{\bf{1}}
\newcommand{\E}{\mathbb E}
\renewcommand{\P}{\mathbb P}
\newcommand{\eL}{\eta^{\text{w}}}
\newcommand{\ec}{\eta^{p}}
\newcommand{\uno}[1]{\mathbf{1}_{#1}}
\newcommand{\cad}{c\`adl\`ag}
\newcommand{\G}{G}
\newcommand{\clambda}{\bar{\lambda}_{\G}}
\renewcommand{\phi}{\varphi}
\newcommand{\Z}{\mathbb Z}
\newcommand{\N}{\mathbb N}
\renewcommand{\P}{\mathbb P}
\newcommand{\x}{\mathbf{x}}
\newlength{\hatchspread}
\newlength{\hatchthickness}
\newlength{\hatchshift}
\newcommand{\hatchcolor}{}
\tikzset{hatchspread/.code={\setlength{\hatchspread}{#1}},
         hatchthickness/.code={\setlength{\hatchthickness}{#1}},
         hatchshift/.code={\setlength{\hatchshift}{#1}},
         hatchcolor/.code={\renewcommand{\hatchcolor}{#1}}}
\tikzset{hatchspread=3pt,
         hatchthickness=0.4pt,
         hatchshift=0pt,
         hatchcolor=black}
\def \mI {\mathcal{I}}
\def \mIA {\mathcal{I}_A}
\def \mIR {\mathcal{I}_R}
\def \mU {\mathcal{U}}
\def \mbI { \bar{\mathcal{I}}}
\def \mR {\mathcal{R}}
\def \mO { \mathcal{O}}
\def \mC { \mathcal{C}}
\newcommand{\lL}{\xrightarrow{\text{w}}}
\theoremstyle{plain}
\newtheorem{theorem}{Theorem}[section]
\newtheorem{corollary}[theorem]{Corollary}
\newtheorem{proposition}[theorem]{Proposition}
\newtheorem{lemma}[theorem]{Lemma}
\theoremstyle{definition}
\newtheorem{remark}{Remark}[section]
\newtheorem{defn}[theorem]{Definition}
\numberwithin{equation}{section}
\begin{document}

\title{The contact process with dynamic edges on $\Z$}

\author{Amitai Linker} \address[A.~Linker]{
  Depto. de Ingenier\'ia Matem\'atica\\
  Universidad de Chile\\
  Av. Beauchef 851, Torre Norte, Piso 5\\
  Santiago\\
  Chile} \email{alinker@dim.uchile.cl}

\author{Daniel Remenik} \address[D.~Remenik]{
  Depto. Ingenier\'ia Matem\'atica and Centro de Modelamiento Matem\'atico (UMI-CNRS 2807)\\
  Universidad de Chile\\
  Av. Beauchef 851, Torre Norte, Piso 5\\
  Santiago\\
  Chile} \email{dremenik@dim.uchile.cl}

\maketitle

\begin{abstract}

  We study the contact process running in the one-dimensional lattice undergoing dynamical percolation, where edges open at rate $vp$ and close at rate $v(1-p)$. 
  Our goal is to explore how the speed of the environment, $v$, affects the behavior of the process.
  Among our main results we find that:
  1. For small enough $v$ the process dies out, while for large $v$ the process behaves like a contact process on $\Z$ with rate $\lambda p$, where $\lambda$ is the birth rate of each particle, so in particular it survives if $\lambda$ is large.
  2. For fixed $v$ and small enough $p$ the network becomes immune, in the sense that the process dies out for any infection rate $\lambda$, while if $p$ is sufficiently close to $1$ then for all $v>0$ survival is possible for large enough $\lambda$.
  3. Even though the first two points suggest that larger values of $v$ favor survival, this is not necessarily the case for small $v$: when the number of initially infected sites is large enough, the infection survives for a larger expected time in a static environment than in the case of $v$ positive but small.
  Some of these results hold also in the setting of general (infinite) vertex-transitive regular graphs.
\end{abstract}

\section{Introduction}\label{introduccion}

Since it was first introduced by Harris in \cite{harris1974} more than forty years ago, the contact process has turned into one of the most widely used models for population growth.
While most of the early work was done for the process on the Euclidean lattice, much of the interest in more recent years has focused on studying the contact process running on random graphs, in an attempt to understand procceses of this type in settings which capture in a better way the main features of real-world networks, whether technological, social, economic or biological in nature; this has lead to tremendous progress in our understanding of both the contact process and some random graph models (see \cite{Durrett2,lalley2017,can_2018} to name a few).
However, for the most part this work has taken place in the context of static random networks; real-world networks, on the other hand, tend to be dynamic in nature, a characteristic that might have a large impact in the qualitative behavior of the process.
In the mathematical literature, this impact has received relatively little attention (see \cite{broman2007,broman2006,Remenik}, and more recently \cite{jacobmorters,metastable}, for some contributions in this direction).

The goal of this work is to study the contact process running on a very simple dynamic environment which captures one of the most important features of dynamical networks: the continuous merging and division of connected components.
This feature has been studied for related processes such as the SIR disease model or the PUSH-PULL rumor spreading protocol in some recent works (see \cite{Durarxiv,ball,inpro,inpro2}).
However, in all of those cases the choice of dynamics for the network tends to reduce the number of connections in the underlying graphs, and thus it works against the growth of the process.
Our interest in this paper, in contrast, is to understand the effect that the introduction of dynamics has on the process independently of changes in the topology of the network.
To this end we study the contact process on networks undergoing a simple stationary dynamics for the environment in which its edges alternate between \textit{available} and \textit{unavailable} independently from each other at some given speed $v$, and focus on this parameter as a measure of the rate of change.
Since this speed increases both the rate of connection and of disconnection, it is not clear a priori whether this hurts or helps the spread of the infection.
We will show that, as opposed to what was observed in \cite{metastable}, in broad terms making the speed of the environment large turns out to favor survival.

We turn now to the definition of the model and our main results.
We will present our results in the simplest case when the base graph is the one-dimensional lattice, as a toy model where we are able to distinguish clearly between the different behaviors that the system presents for $v$ small and $v$ large.
However, with a bit of work most of our proofs (the main exception being Theorem \ref{teo1}, concerning small $v$) are valid in the setting of general vertex-transitive regular graphs.
We discuss briefly this extension in Section \ref{sec:genG}, and then present the corresponding proofs in Section \ref{proofs} in this more general setting.

\section{Setting and results}
 \label{Settings}

 \subsection{The CPDE on $\Z$}\label{sec:CPDEonZ}
 
Consider the one-dimensional lattice $(\Z,E)$ with $E$ the set of edges of the form $\{x,x+1\}$. The \textit{Contact Process with Dynamic Edges} $\{(\eta_t,\zeta_t)\}_{t\geq0}$ (from now on abbreviated CPDE) on $\Z$ is an interacting particle system made out of two processes, an \textit{environment} $\zeta_t\!:E\to\{0,1\}$ and an \textit{infection process} $\eta_t\!:\Z\to\{0,1\}$, whose transition rates we define locally as follows: for some fixed $v>0$ and $p\in(0,1)$ the environment evolves at any given $e\in E$ according to
 \begin{align*}
 0\longrightarrow 1&\quad \mbox{ at rate }\quad vp\\
 1\longrightarrow0&\quad \mbox{ at rate }\quad v(1-p),
 \end{align*}
 while for some fixed $\lambda>0$, the infection evolves at any given $x\in\Z$ according to
 \begin{align*}
 0\longrightarrow 1&\quad\mbox{ with rate }\quad\lambda n_{t,x}\\
 1\longrightarrow0&\quad\mbox{ with rate }\quad1,
 \end{align*}
 where $n_{t,x}=\eta_t(x-1)\zeta_t(\{x,x-1\})+\eta_t(x+1)\zeta_t(\{x,x+1\})$. We interpret $\zeta_t(e)=1$ as the edge $e$ being \textit{available} at time $t$, so that $n_{t,x}$ is the number of nearest neighbors of $x$ that are infected and connected to $x$ at that given moment. It can be checked that the rates above uniquely define the CPDE as a \cad~ Feller process in $\{0,1\}^{\Z\cup E}$, whose law we denote by $\P$ (or $\P_{\lambda,v,p}$ when we need to emphasize the dependence on the parameters). Even further, this process is monotone with respect to its initial condition, meaning that for any $a\in\{0,1\}^{\Z}$ and $b\in\{0,1\}^{E}$, the function $\P(\eta_t\geq a,\,\zeta_t\geq b\,|\eta_0,\zeta_0)$ is increasing on $\eta_0$ and $\zeta_0$ (where we use the usual partial pointwise order between $\{0,1\}$-valued functions). The proof of these facts is standard (see \cite{IPS1}). 
 
 The CPDE can be alternatively constructed by first sampling $\zeta$ and then running $\eta_t$ on the time-inhomogeneous graph defined by the environment (the so-called \textit{quenched} process). The environment process evolves as dynamical percolation on $(\Z,E)$ (as introduced in \cite{dynamicperc}); in our parametrization we introduce the parameter $v$, the \textit{environment speed} (i.e. the rate at which every edge updates its state), whose role in the behavior of the process we are interested in understanding. This process is stationary with respect to the product Bernoulli measure $\{0,1\}^E$ with density $p$, and in what follows we will assume (unless otherwise stated) that $\zeta_0$ is chosen at random using this distribution. This assumption allows us to identify $p$ as the density of available edges at any given time and, moreover, it allows us to attribute any effect of the evolution of the environment on the quenched process to its dynamics (and in particular its speed) rather than to changes in the properties of the network.
 
 From the form of the transition rates, for any fixed realization $\zeta$ of the environment the quenched infection process is a version of the contact process running on the evolving graph defined by $\zeta$. It follows, in particular, that the survival probability $\P_{\lambda,v,p}(\eta_t\neq0\;\forall t>0\,|\,\zeta)$ is increasing in $\lambda$. Averaging with respect to $\zeta$ we deduce that the annealed survival probability $\P_{\lambda,v,p}(\eta_t\neq0\;\forall t>0)$ satisfies the same property, so it makes sense to define a critical parameter $\lambda_0(v,p)$ for survival of the CPDE:
 \begin{equation*}
 \lambda_0(v,p)\;=\;\inf\!\big\{\lambda>0,\;\P_{\lambda,v,p}\big(\eta_t\neq0\;\forall t>0\big)>0\big\},
 \end{equation*}
 where we choose $\eta_0=\one_{\{0\}}$ as the initial condition for the infection process (it can be checked using standard arguments that $\lambda_0(v,p)$ is the same for any initial condition which contains a positive but finite number of infected sites).

\subsection{Main results}\label{sec:results}

 Our main goal in this paper is to give a (partial) description of the qualitative behaviour of $\lambda_0$ as a function of $v$ and $p$. An obvious first property is that $\lambda_0(p,v)$ is decreasing in $p$, since a higher density of open edges makes it easier for the infection to survive (this can be proved using a standard coupling argument). The dependence of $\lambda_0$ on $v$, on the other hand, is much subtler: increasing $v$ implies both opening and closing edges at a quicker pace, and it is not at all clear which of the two effects has a stronger impact on $\lambda_0$ (see also Remark \ref{rem:v}). But the following weaker version of monotonicity holds:
 
 \begin{proposition}
 	\label{monotoneprop}
 	The function $\frac{1}{v}\lambda_0(v,p)$ is non-increasing in $v$ for every value of $p$.
 \end{proposition}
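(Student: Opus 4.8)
The plan is to deduce the statement from a time-rescaling identity together with the monotonicity of survival in the healing (recovery) rate. To set things up I would enlarge the family of processes: for $\mu>0$ let $\P^{\mu}_{\lambda,v,p}$ denote the law of the variant of the CPDE in which infected sites heal at rate $\mu$ instead of $1$ (all other rates unchanged), so that $\P^{1}_{\lambda,v,p}=\P_{\lambda,v,p}$, and let $\Lambda(v,\mu)$ be the associated critical infection rate, $\Lambda(v,\mu)=\inf\{\lambda>0:\P^{\mu}_{\lambda,v,p}(\eta_t\neq0\;\forall t>0)>0\}$ (the fixed parameter $p$ being suppressed), so that $\lambda_0(v,p)=\Lambda(v,1)$. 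The first step is the scaling identity $\Lambda(cv,c\mu)=c\,\Lambda(v,\mu)$ for every $c>0$. To prove it I would note that if $(\eta_t,\zeta_t)$ has law $\P^{\mu}_{\lambda,v,p}$ then the time-accelerated process $(\eta_{ct},\zeta_{ct})$ has law $\P^{c\mu}_{c\lambda,cv,p}$, since multiplying every transition rate by $c$ turns the environment into dynamical percolation at speed $cv$ with the same density $p$, the infection rate into $c\lambda$ and the healing rate into $c\mu$; since the survival event is invariant under a deterministic time change, $\P^{\mu}_{\lambda,v,p}(\eta_t\neq0\;\forall t)=\P^{c\mu}_{c\lambda,cv,p}(\eta_t\neq0\;\forall t)$, and the identity for critical rates follows by taking infima over surviving $\lambda$.

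The second step is to show that $\mu\mapsto\Lambda(v,\mu)$ is non-decreasing, i.e.\ that faster healing cannot help the infection survive. For $\mu_1\le\mu_2$ I would couple $\P^{\mu_1}_{\lambda,v,p}$ and $\P^{\mu_2}_{\lambda,v,p}$ using the \emph{same} realization of the environment $\zeta$, the same Poisson processes of infection arrows, and a realization of the rate-$\mu_1$ healing marks at each site as a thinning of the rate-$\mu_2$ marks. Conditionally on $\zeta$ the infection is a contact process on the time-inhomogeneous graph determined by $\zeta$, hence attractive, so erasing healing marks can only enlarge the infected set; thus $\eta^{\mu_1}_t\ge\eta^{\mu_2}_t$ for all $t$ in this coupling, and averaging over $\zeta$ yields $\P^{\mu_2}_{\lambda,v,p}(\eta_t\neq0\;\forall t)\le\P^{\mu_1}_{\lambda,v,p}(\eta_t\neq0\;\forall t)$, whence $\Lambda(v,\mu_1)\le\Lambda(v,\mu_2)$.

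Combining the two steps, for any $c\ge1$ I get
\[
\lambda_0(cv,p)=\Lambda(cv,1)\le\Lambda(cv,c)=c\,\Lambda(v,1)=c\,\lambda_0(v,p),
\]
where the inequality is the monotonicity in $\mu$ (applicable since $1\le c$) and the middle equality is the scaling identity. Dividing by $cv$ gives $\tfrac1{cv}\lambda_0(cv,p)\le\tfrac1v\lambda_0(v,p)$ for every $c\ge1$, which is exactly the asserted monotonicity of $v\mapsto\tfrac1v\lambda_0(v,p)$ (the conventions $c\cdot\infty=\infty$ cover the regime, anticipated by Theorem~\ref{teo1}, where $\lambda_0=\infty$).

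The only point that needs genuine care is the attractiveness used in the second step: one must check that the graphical construction still produces a monotone coupling when the set of usable edges varies in time. This holds because freezing $\zeta$ leaves an ordinary contact process — time-inhomogeneous, but obeying the standard local birth/death rules — whose graphical representation is monotone in the usual way; alternatively it can be deduced from the monotonicity of $\P$ in its initial condition recorded in Section~\ref{sec:CPDEonZ} together with a comparison of generators. Everything else is routine bookkeeping.
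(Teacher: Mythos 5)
Your proof is correct, and it is essentially the paper's own argument repackaged: the paper rescales time by $v'/v>1$ (sending the Poisson intensities $\lambda,vp,v(1-p),1$ to $\frac{v'}{v}\lambda,v'p,v'(1-p),\frac{v'}{v}$) and then thins the recovery marks from rate $\frac{v'}{v}$ down to rate $1$, which can only help survival, exactly your scaling identity followed by monotonicity in the healing rate. Introducing the auxiliary critical curve $\Lambda(v,\mu)$ is a clean way to organize the two steps, but the underlying coupling and time-change are the same.
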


 The following result provides some loose bounds on $\lambda_0(v,p)$, in terms of the critical parameter $\bar\lambda\coloneqq\lambda_0(0,1)$ of the standard contact process on $\Z$, which will be useful later on:
 
 \begin{proposition}
 	\label{lambda1}
 	For each $v,\lambda \geq 0$ and $p\in[0,1]$, the infection process in the CPDE is stochastically dominated from above by a contact process on $\Z$ with infection rate $\lambda$, and from below by a contact process on $\Z$ with infection rate $\beta(\lambda,v,p)$, where
 	$$\beta(\lambda,v,p)\;=\;\tfrac{1}{2}\big(\lambda+v-\sqrt{(v+\lambda)^2-4\lambda vp}\big).$$
 	As a consequence,
 	\[\bar{\lambda}\;\leq\;\lambda_0(v,p)\;\leq\;\hat\lambda(v,p)\]
 	where $\bar{\lambda}$ is the critical parameter of the contact process on $\Z$ and
 	\[\hat\lambda(v,p)=\bar{\lambda}\!\left(\frac{v-\bar{\lambda}}{vp-\bar{\lambda}}\right)\quad\text{if}\quad vp>\bar{\lambda},\qquad \hat\lambda(v,p)=\infty\quad\text{otherwise}.\]
 \end{proposition}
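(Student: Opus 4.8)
\textbf{Plan.}

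\emph{Upper bound.} I would realize the CPDE through the usual graphical construction: a rate-$1$ recovery mark process at each site, a rate-$\lambda$ arrow process along each directed edge, and the edge-toggle marks producing $\zeta$; an arrow at $(x,y)$ at time $t$ is \emph{effective} only if $\zeta_t(\{x,y\})=1$. Running simultaneously, from the same initial infected set, the ordinary rate-$\lambda$ contact process $\xi_t$ on $\Z$ driven by the \emph{same} recovery and arrow marks but with \emph{every} arrow declared effective, a trivial induction over the (a.s.\ locally finite) marks gives $\eta_t\le\xi_t$ for all $t$. This is the claimed domination from above, and it yields $\bar\lambda\le\lambda_0(v,p)$ at once: for $\lambda<\bar\lambda$ the process $\xi$, hence $\eta$, dies out a.s.

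\emph{Lower bound.} The core object is the single-edge ``infection channel''. Fix a directed edge, assume its source is infected, and track the pair (edge state, transmission not yet occurred), where ``transmission'' means an arrow fires while the edge is available. This is a defective continuous-time Markov chain on $\{\text{available},\text{unavailable}\}$ with sub-generator
\[
Q=\begin{pmatrix}-(\lambda+v(1-p)) & v(1-p)\\ vp & -vp\end{pmatrix},
\]
whose eigenvalues are $\tfrac12\big(-(\lambda+v)\pm\sqrt{(\lambda+v)^2-4\lambda vp}\big)$; the larger (Perron) one is exactly $-\beta(\lambda,v,p)$. The plan is to build, independently for each directed edge, a rate-$\beta$ Poisson ``effective-transmission'' point process coupled to the CPDE so that each of its points is an effective transmission of the CPDE on that edge whenever the source is infected there; letting $\chi_t$ be the contact process driven by these processes together with the CPDE recovery marks then gives $\chi_t\le\eta_t$ by construction. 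Because $\beta$ is the Perron eigenvalue of $Q$, the way to obtain an \emph{exactly} Poisson rate-$\beta$ stream (rather than merely exponential transmission tails) is an $h$-transform: using the positive right eigenvector $h=(h_A,h_U)$ with $Qh=-\beta h$, one introduces an auxiliary randomization of the channel under which ``transmission'' occurs at the constant rate $\beta$, and the independence of the edge processes makes the resulting $\chi$ a genuine rate-$\beta$ contact process. From $\chi_t\le\eta_t$, supercriticality of $\chi$ (i.e.\ $\beta>\bar\lambda$) forces survival of the CPDE, so $\lambda_0(v,p)\le\inf\{\lambda:\beta(\lambda,v,p)>\bar\lambda\}$.

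\emph{Consequence.} It remains bookkeeping: $\lambda\mapsto\beta(\lambda,v,p)$ is continuous and strictly increasing, with $\beta\to0$ as $\lambda\to0$ and $\beta\nearrow vp$ as $\lambda\to\infty$. Hence if $vp\le\bar\lambda$ the set $\{\lambda:\beta(\lambda,v,p)>\bar\lambda\}$ is empty and $\hat\lambda=\infty$; if $vp>\bar\lambda$ (which forces $v>\bar\lambda$ since $p<1$) the equation $\beta(\lambda,v,p)=\bar\lambda$, after rewriting it as $\lambda+v-2\bar\lambda=\sqrt{(\lambda+v)^2-4\lambda vp}$ and squaring, reduces to $\lambda(vp-\bar\lambda)=\bar\lambda(v-\bar\lambda)$, so $\lambda=\bar\lambda\,(v-\bar\lambda)/(vp-\bar\lambda)=\hat\lambda(v,p)$, and monotonicity identifies this root with the infimum above. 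Combined with the upper bound this gives $\bar\lambda\le\lambda_0(v,p)\le\hat\lambda(v,p)$.

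\emph{Main obstacle.} The delicate point is the lower-bound coupling. As seen by the infection process the per-edge channel is genuinely non-Markovian: each time the target recovers while the source is still infected the channel ``restarts'' from whatever state the environment is then in, and a stationary (or otherwise arbitrary) initial state yields only asymptotically-exponential transmission tails, not the exact Poisson$(\beta)$ structure a contact process requires. Making the $h$-transform precise, so that the extracted process is simultaneously below $\eta$, exactly of rate $\beta$ along each edge, and independent across edges, is where the real work lies; the upper bound and the algebra producing $\hat\lambda$ are routine by comparison.
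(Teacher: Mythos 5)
The overall architecture is the same as the paper's: domination from above by coupling to the rate-$\lambda$ CP, domination from below by extracting a rate-$\beta$ Poisson stream from each edge's valid infections, and then solving $\beta(\lambda,v,p)=\bar\lambda$ to get $\hat\lambda$. Your eigenvalue computation correctly identifies $-\beta$ as the Perron eigenvalue of the sub-generator $Q$, and the algebra for $\hat\lambda$ is fine.

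However, the lower-bound coupling as you describe it has a genuine gap, and the ``main obstacle'' you flag is partly a misconception. The paper does not attempt to construct the Poisson stream ``as seen by the infection process''; it never conditions on the source being infected nor worries about target recoveries. Instead, it works purely at the level of the environment and arrow processes on a single edge: the pair $(\zeta_t(e),N_t(e))$, with $N_t(e)=|\widebar{\mathcal I}^e\cap[0,t]|$, is itself a time-homogeneous Markov chain (the ``hidden Markov chain'' of Broman), completely independent of $\eta$. Invoking Broman's Theorem 1.4 one obtains a rate-$\beta$ Poisson process $\mathcal P^e\subseteq\widebar{\mathcal I}^e$, coupled so that every Poisson jump coincides with a jump of $N_t(e)$, and these $\mathcal P^e$ are i.i.d.\ across edges. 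Once you have this pathwise inclusion, the graphical construction with arrows $\mathcal P^e$ and recoveries $\mathcal R^x$ produces a rate-$\beta$ contact process $\chi_t$ satisfying $\chi_t\le\eta_t$ automatically: a valid path for $\chi$ is a fortiori a valid path for the CPDE, and the ``restart upon recovery'' issue simply never arises because the subset relation $\mathcal P^e\subseteq\widebar{\mathcal I}^e$ holds unconditionally. So the hard analytic content is exactly the stochastic-domination lemma for Markov-modulated Poisson processes that Broman proves; your $h$-transform sketch identifies the correct rate but does not supply the coupling (the Doob $h$-transform produces a conditioned law, not a joint construction realizing the two point processes on the same probability space with one contained in the other). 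Absent Broman's theorem or an equivalent coupling argument, the lower bound is not established.
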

 
 The proof of the domination from below by a contact process with infection rate $\beta(\lambda,v,p)$ is based on a result of Broman \cite{broman2007}, while the formula for $\hat\lambda$ comes simply from solving $\bar\lambda\leq \beta(\lambda,v,p)$ for $\lambda$. 
 The expression for $\hat\lambda$ may seem opaque at first sight, but notice that
 \begin{equation}
 \label{eq:lc}
 \limsup_{v\rightarrow\infty}\lambda_0(v,p)\leq\lim_{v\rightarrow\infty}\hat\lambda(v,p)\;=\;\bar{\lambda}/p,
 \end{equation}
 which means that for any $\lambda>\bar{\lambda}/p$ the infection process survives if $v$ is large enough. 
 This proves the easier half of the first of our main theorems about $\lambda_0(v,p)$:
 
 \begin{theorem}
 	\label{teofast}
 	For any $p\in(0,1]$, \;$\lim_{v\rightarrow\infty}\lambda_0(v,p)\;=\;\bar\lambda/p$.
 \end{theorem}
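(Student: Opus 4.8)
The bound $\limsup_{v\to\infty}\lambda_0(v,p)\le\bar\lambda/p$ has already been obtained in \eqref{eq:lc}, so it remains to prove the matching inequality $\liminf_{v\to\infty}\lambda_0(v,p)\ge\bar\lambda/p$; equivalently, we fix $\lambda<\bar\lambda/p$ and show that the CPDE with parameters $(\lambda,v,p)$ dies out once $v$ is large enough. The heuristic is that when $v$ is large the environment equilibrates almost instantaneously on any bounded set of edges, so that — since the transmission rate across an edge $e$ is linear in $\zeta_t(e)$ — each potential transmission effectively occurs at the time-averaged rate $\lambda p<\bar\lambda$, making the ``averaged'' model a subcritical contact process on $\Z$, which becomes extinct. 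I would make this precise through a renormalization argument comparing the CPDE with subcritical oriented percolation, using the averaging to verify the relevant block estimate.

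The first ingredient is the quantitative input from $\lambda p<\bar\lambda$: the contact process on $\Z$ with infection rate $\lambda p$ started from any finite configuration is a.s.\ eventually empty, has an a.s.\ finite set of ever-infected sites, and (by the exponential decay of the subcritical phase, a classical fact for the one-dimensional contact process) the radius of that set and the extinction time have exponentially small tails. Hence, for every $\e>0$ there are a scale $L$, a confinement box $B=[-3L,3L]$ and a horizon $T=T(L)$ such that the contact process with rate $\lambda p$ started with all of $[-L,L]$ infected never leaves $B$ and is empty by time $T$ with probability at least $1-\e$ (the exponential tails serving to absorb the $O(L)$-fold union bound over the sites of $[-L,L]$); denote by $\G$ the complementary ``bad'' event — the infection touches $\partial B$ at some time, or is alive at time $T$.

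The main step is to transfer this to the CPDE for $v$ large, i.e.\ to show that the CPDE started from $\one_{[-L,L]}$ satisfies $\P_{\lambda,v,p}(\G)\le 2\e$ for all large $v$. The key observation is that, before the infection first reaches $\partial B$, the CPDE on $\Z$ started from $\one_{[-L,L]}$ agrees with the CPDE run on the finite path graph $B$ (edges and sites outside $B$ are irrelevant until the infection hits the boundary), and the latter is a finite-state Markov process whose environment component runs at speed $v$ and has product-Bernoulli$(p)$ as its unique stationary law. By the classical averaging principle for fast–slow Markov chains, as $v\to\infty$ the infection marginal of this finite process converges, in the Skorokhod sense on $[0,T]$, to the contact process on $B$ with infection rate $\lambda p$ — the averaged transmission rate being exactly $\lambda p$ precisely because it is linear in $\zeta_t(e)$. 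Since $\G$ is, after an inconsequential widening of the box and shortening of the horizon, a continuity set for this limiting finite-state chain (and since, on its complement, the finite-box process and the process on $\Z$ coincide), one concludes $\P_{\lambda,v,p}(\G)\le\e+o(1)$ as $v\to\infty$, hence $\le 2\e$ for $v$ large. I expect this averaging-plus-continuity step to be the main technical obstacle.

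Finally, one feeds this into the standard comparison with oriented percolation that yields extinction for attractive, finite-range particle systems (in the spirit of Harris's contour argument): choosing $\e$ small enough that $2\e$ lies below the relevant percolation threshold and then fixing $L$, $T$ and $v$ accordingly, the occurrence of $\G$ over space–time blocks is stochastically dominated by a subcritical oriented percolation, so a.s.\ only finitely many blocks are ever reached and the infection started from $\one_{[-L,L]}$ — hence also from $\one_{\{0\}}$, by monotonicity in the initial condition — dies out. The only point requiring some care here is that the renormalization's Markov/independence structure is not spoiled by the persistent-in-time correlations of the environment along a given edge; this is fine since the environment is globally stationary, Markovian in time, and a product over independent edges. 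This gives $\liminf_{v\to\infty}\lambda_0(v,p)\ge\lambda$ for every $\lambda<\bar\lambda/p$, and combined with \eqref{eq:lc} it yields $\lim_{v\to\infty}\lambda_0(v,p)=\bar\lambda/p$.
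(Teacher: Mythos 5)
Your outline is sound and reaches the right conclusion, but the argument is genuinely different from the paper's: you propose a renormalization / oriented-percolation comparison whose block estimate is supplied by an averaging principle together with exponential decay of the subcritical contact process, whereas the paper constructs a direct coupling. Specifically, the paper introduces a ``freshness'' indicator $F_t(e)$ (whether the last event at $e$ was an update) and two auxiliary processes: $\eta^{\mathrm w}$, an upper bound in which every infection across an unrefreshed edge counts as valid, and $\eta^{p}$, a genuine contact process with rate $\lambda p$ obtained by keeping such infections with probability $p$ only. Lemma~\ref{keyrestricted} shows that the expected number of times $\eta^{\mathrm w}$ and $\eta^{p}$ can diverge tends to zero as $v\to\infty$, uniformly in the initial environment $\zeta_0$, and extinction of $\eta^{\mathrm w}$ (hence of $\eta$) follows from a restart recursion using the strong Markov property and translation invariance. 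The trade-offs are worth noting. First, the paper's route needs only the Aizenman--Jung finite-susceptibility bound $\int_0^\infty\E|A_t|\,dt<\infty$ for a subcritical contact process, which is a weaker input than the Bezuidenhout--Grimmett exponential decay your $O(L)$-fold union bound over $[-L,L]$ requires. Second, the paper's coupling avoids any averaging black box; your averaging step is heuristically right (the transmission events along $e$ form a Cox process with intensity $\lambda\zeta_t(e)\,dt$, which converges to a rate-$\lambda p$ Poisson process as $v\to\infty$), but for jump processes this is not purely citational and would take some work to pin down. Third, and most substantively, your oriented-percolation comparison requires the bad-block probability to be small \emph{uniformly} over the environment state at the block boundary -- the worst case being $\zeta_{nT}\equiv 1$ -- whereas your averaging argument as written addresses only the stationary environment; the paper confronts the analogous issue head-on via the $\sup_{\zeta_0}$ in Lemma~\ref{keyrestricted} and a short SI burn-in of length $s=\log\frac{1-p}{\e}$ inside its proof, and you would need a similar burn-in inside each block before the averaging kicks in. Finally, the paper's coupling generalizes verbatim to any vertex-transitive regular graph $\G$, whereas a one-dimensional block tiling does not carry over immediately. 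None of these points is fatal, but each is a place where real work would be needed to complete your proof.
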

 
 The idea is that if $v$ is large, the states of an edge at different times are almost independent, so we can approximate $\eta$ by a contact process with intensity $\lambda$ where each infection event is kept (independently) with probability $p$ and dismissed otherwise, which is simply a contact process on $\Z$ with intensity $\lambda p$, and hence we must have $\lambda_0(v,p)p\approx\bar\lambda$.
 
 Theorem \ref{teofast} together with \eqref{eq:lc} show that the upper bound $\lambda_0\leq\hat\lambda$ becomes sharp as $v$ gets large, but in general we expect $\lambda_0$ to be smaller than $\hat\lambda$.
 In particular, if we take any fixed $v\leq\bar{\lambda}$ then $\hat\lambda(v,p)=\infty$ for all $p$, but if $p\sim1$ then the infection hardly ever sees any closed edges, so we actually expect $\lambda_0\sim\bar{\lambda}$.
 
 On the other hand, our choice of $\hat\lambda$ recovers again the correct behavior of $\lambda_0$ for $v\sim 0$. In this scenario the CPDE will be close to a contact process running on a static percolation cluster of $\Z$ with parameter $p$, and in such a graph the infection is necessarily trapped inside finite components where it eventually dies out, and hence we expect $\lambda_0\longrightarrow\infty$ as $v\to0$. This is the first part of our next result:
 
 \begin{theorem}\label{teo1}
 	\leavevmode
	\begin{enumerate}[label=(\alph*)]
	  \item For all $p\in[0,1)$ we have $\lim_{v\rightarrow0}\lambda_0(v,p)=\infty$.
	  	In other words, for all $\lambda>0$ and all $p\in[0,1)$ we can take $v$ small enough so that the infection process in the CPDE dies out.
	  \item Furthermore, for $p$ fixed and $v$ small enough there are constants $\beta_0,\beta_1>0$ (which may depend on $p,\lambda$ and $v$) such that for any initial condition $\eta_0$, we have
	  	\begin{equation*}
	  	\E(\tau_\text{\normalfont ext}\,|\,\eta_0)\;\leq\;\beta_0\log(|\eta_0|)+\beta_1
	  	\end{equation*}
	  	where $|\eta_0|$ stands for the number of initially infected sites and $\tau_\text{\normalfont ext}$ is the extinction time of the CPDE (i.e. $\tau_\text{\normalfont ext}=\inf\!\big\{t\geq0\!:\eta_t=\emptyset\big\}$).
 \end{enumerate} \end{theorem}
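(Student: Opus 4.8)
I would deduce (a) from (b): applying (b) to $\eta_0=\one_{\{0\}}$ gives $\E(\tau_\text{ext}\mid\one_{\{0\}})<\infty$, hence a.s.\ extinction and $\lambda<\lambda_0(v,p)$; since this holds for every $\lambda>0$ once $v$ is small enough (depending on $\lambda$ and $p$), $\lambda_0(v,p)\to\infty$ as $v\to0$. So the real task is (b), and my plan for it has two steps.

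The first step reduces an arbitrary initial condition to a single site. Building the CPDE from a graphical representation in which $\zeta$ is sampled first and the infection is then run on top of it using Harris marks, the (quenched, time-inhomogeneous) infection is additive: $\eta^{\eta_0}_t=\bigcup_{x:\,\eta_0(x)=1}\eta^{\one_{\{x\}}}_t$ pathwise, all copies coupled through the same $\zeta$ and the same marks, so $\tau_\text{ext}(\eta_0)=\max\{\tau_\text{ext}(\one_{\{x\}}):\eta_0(x)=1\}$; by a union bound together with translation invariance of the annealed law,
\[\P\big(\tau_\text{ext}(\eta_0)>t\big)\;\le\;|\eta_0|\,\P\big(\tau_\text{ext}(\one_{\{0\}})>t\big).\]
Hence it suffices to establish an exponential tail bound $\P(\tau_\text{ext}(\one_{\{0\}})>t)\le Ce^{-ct}$ with $C,c>0$ depending on $\lambda,v,p$; integrating $t\mapsto\min\{1,|\eta_0|Ce^{-ct}\}$ then gives the bound in (b) with $\beta_0=1/c$ and $\beta_1$ of order $(1+\log C)/c$.

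The second and main step is the exponential tail bound, for which I would use a multiscale argument, the key point being the matching of two time scales: on a fixed interval of $L$ sites the infection survives for a time of order $e^{\gamma L}$ (a classical fact, with $\gamma=\gamma(\lambda)>0$ in the relevant regime $\lambda>\bar\lambda$; the cases $\lambda\le\bar\lambda$ are easier, since then the infection is dominated by the critical or subcritical contact process on $\Z$), while a single edge of the environment retains its state for a time of order $1/v$. Choosing $L$ of order $\log(1/v)$ makes these comparable, and one runs the process in rounds of length $T$ of order $1/v$. At the start of a round the infection occupies a finite family of finite intervals — the components of the current open graph containing an infected site — which one classifies as short ($\le L$) or long ($>L$). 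On a short component the infection dies out within the round with high probability (for $v$ small), and with high probability does so before the component can merge with a neighbour; a long component — which occurs only with probability of order $p^L$, and which the infection can reach from a short one only through an open stretch of comparable length — may survive the round, but under the environment dynamics it is chopped by closing edges into pieces that are overwhelmingly short. Since $p^L=v^{\,\log(1/p)/\gamma}\to0$ as $v\to0$, the number of long components carrying the infection at the start of each round is stochastically dominated by a subcritical branching-type process, which dies out with an exponentially decaying tail; as each round has length of order $1/v$, this yields $\P(\tau_\text{ext}(\one_{\{0\}})>t)\le Ce^{-ct}$.

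The main obstacle I expect is precisely that these two scales coincide rather than separate: the environment near an $O(L)$-window is frozen only for about as long as the infection it supports takes to die, so one cannot kill the infection ``in one shot'' by freezing the environment, and must instead follow it over many rounds and quantify how the dynamics fragment the rare long components, showing in particular that the expected number of long components carrying the infection strictly decreases from round to round. This requires quantitative estimates, matched in the regime $L\asymp\log(1/v)$, both on dynamical percolation on $\Z$ (tails of component lengths, time for an interval to shatter, rate of merging) and on the contact process on a finite interval (survival time and speed of spread), together with some care about the weak dependence between consecutive rounds. By contrast, the reduction to a single site and the passage from the exponential tail to the $\log|\eta_0|$ bound are routine.
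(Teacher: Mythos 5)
Your reduction of (a) to (b), and your step~1 reduction to a single initially infected site, are both correct: the quenched infection is additive over the initial sites (same Harris marks, same $\zeta$), so $\tau_{\text{ext}}(\eta_0)=\max_{x\in\eta_0}\tau_{\text{ext}}(\one_{\{x\}})$, and the union bound plus integrating $\min\{1,|\eta_0|Ce^{-ct}\}$ gives the logarithmic bound once an exponential tail for the one-site extinction time is in hand. The paper reaches the same logarithmic bound slightly differently --- it works through a comparison process $Z$ and uses FKG on decreasing events to get $\P(N_{\text{ext}}>n\mid Z_0=B)\le 1-(1-\gamma^n)^{|B|}$ --- but your direct union bound is an equally valid and arguably simpler route to that step.

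Where your proposal diverges from the paper, and where the gap lies, is in step~2. You propose to track the actual dynamical-percolation components that carry infection, classify them as short ($\le L$) or long ($>L$) with $L\asymp\log(1/v)$, and argue that the long components form a subcritical branching-type structure. The paper never looks at components at all. Instead it fixes a deterministic grid $r_0\Z$ with $r_0$ depending only on $p$ (not on $v$!), sets $T=1/v$, and uses the notion of an \emph{$n$-closed edge} (an edge closed throughout $[nT,(n+1)T)$) as a barrier. With $vT=1$, each edge is $n$-closed with probability at least a constant $\delta_0(p)>0$ uniformly in $v$ (Proposition~\ref{propdesiV}), so an $r_0$-interval contains a persistent barrier except with probability $\le(1-\delta_0)^{r_0}$. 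This gives random but \emph{deterministically bounded} ($\le 2r_0-1$) spatial blocks $B_{k,n}$ in which the infection is quarantined whenever the flanking $V$-variables vanish, and within which it dies out in time $T$ with high probability once $T$ is large (determining how small $v$ must be). Bounding the $U,V$ indicators by i.i.d.\ $\mathrm{Ber}(\epsilon)$ then reduces everything to a subcritical oriented-percolation process $Z$ with the $\log|Z_0|$ extinction bound of Lemma~\ref{zdies}. This sidesteps the very bookkeeping you flag as the main obstacle: there is no need to follow merging/shattering of components, no classification of components by length, and no dependence between rounds beyond the one-edge conditioning handled in Proposition~\ref{propdesiV}.

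The concrete gap in your proposal is that the branching-process domination --- in particular the claim that the expected number of ``long components carrying infection'' strictly decreases from round to round --- is asserted, not established, and it is precisely here that most of the difficulty lives. You would have to make precise what ``carrying infection'' means across a round (a long component can split, two short ones can merge into a long one, and infection can hop off a component mid-round through a freshly opened edge), control the correlations this induces between consecutive rounds, and show that all of these bookkeeping leaks still leave the process subcritical when $L\asymp\log(1/v)$. None of this is impossible, and the scale-matching heuristic ($e^{\gamma L}\asymp T\asymp 1/v$, $p^L\to 0$) is the right one, but it is significantly more delicate than the paper's barrier argument, which gets subcriticality from the elementary observation that $n$-closed edges occur with $v$-independent positive density once $vT=1$.
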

 
\begin{remark}\label{rem:v}
 It is reasonable to guess that $\lambda_0(v,p)$ should be monotone in $v$: in fact, if one thinks of the dynamics of the CPDE as having each infected site send an infection at rate $\lambda$ to a randomly chosen neighbor and then discarding those infections going through unavailable edges or landing on already infected sites, then when $v$ is small the network does not change much, so many attempted infections are wasted, while larger values of $v$ make it easier for infected sites  to reach previously unreachable healthy sites.
 Our previous results together with Theorem \ref{teo1}(a) (as well as some of the results following below) are, at least, consistent with this view.

 \noindent However, Theorem \ref{teo1}(b) shows that the behavior of the CPDE is necessarily subtler than what this guess would suggest, at least for small $v$.
 Indeed, for any $\lambda>\bar{\lambda}$ it is known that the standard contact process running on $\{1,\dotsc,N\}$ (starting with a single infected site) has expected extinction time bounded from below by $c_0e^{c_1 N}$, where $c_0,c_1>0$ are independent of $N$. 
 On the other hand, for the CPDE starting at any $\eta_0$ with $|\eta_0|$ large enough it is easy to show that, with a very large probability, at least one of the initially infected sites is contained on a connected interval of length $c_2\ln(|\eta_0|)$, where $c_2>0$ depends on $p$.
 It follows that in the static case we have
 \[\E_{\lambda,0,p}(\tau_\text{ext}\,|\,\eta_0)\;\geq\;c_0|\eta_0|^{c_1c_2},\]
 while for $v>0$ as in the theorem we have $\E_{\lambda,v,p}(\tau_\text{ext}\,|\,\eta_0)\;\leq\;\beta_0\log(|\eta_0|)+\beta_1$ so, for large initial conditions, the infection running on a static environment is more resilient than the same infection running on a slightly dynamical one.
\end{remark}


A natural question raised by Theorem \ref{teo1} is whether there are (small, but positive) values of $v$ and/or $p$ such that $\lambda_0(v,p)=\infty$, which means that infections always die out, regardless of their infection rates. We will say in such a case that the network is {\it immune}, and we define the {\it immunity region} $\mathfrak{I}$ accordingly as
 \[\mathfrak{I}=\big\{(v,p)\in(0,\infty)\times(0,1),\lambda_0(v,p)=\infty\big\}.\]
Note that all finite static graphs are immune while all infinite connected static graphs are not, so the existence of a non-trivial immunity region would show that, in a sense, our dynamic random graph $\{(\Z,E_t)\}_{t\geq0}$ lies halfway between the two cases (all connected clusters are finite at any given time, but for any two sites $x$ and $y$ and any given $t>0$ there is a.s. some path connecting $(x,t)$ and $(y,s)$ for $s>t$ sufficiently large). Our next result shows that if instead of taking $v$ small as in Theorem \ref{teo1} we take $p$ close to zero, then we do have immunity, and in this scenario the expected extinction time also grows at most logarithmically with the initial condition:
 
 \begin{theorem}
 	\label{teo2}
 	\leavevmode
 	\begin{enumerate}[label=(\alph*)]
 		\item For all $v>0$ there is a $p_0(v)\in(0,1)$ such that $\lambda_0(v,p)=\infty$ for all $p<p_0(v)$.
 		In other words, there is a curve $v\in(0,\infty)\longmapsto p_0(v)\in(0,1)$ such that $\{(v,p):0<p<p_0(v)\}\subseteq\mathfrak{I}$.
 		\item Furthermore, for $v$ fixed and any $p<p_0(v)$ there are constants $\beta_0,\beta_1>0$ (which may depend on $p$ and $v$, but not on $\lambda$) such that for any initial condition $\eta_0$, we have
 		\begin{equation*}
 		\E(\tau_\text{\normalfont ext}\,|\,\eta_0)\;\leq\;\beta_0\log(|\eta_0|)+\beta_1
 		\end{equation*}
 		where $|\eta_0|$ and $\tau_\text{\normalfont ext}$ are as in Theorem \ref{teo1}.
 	\end{enumerate} 
 \end{theorem}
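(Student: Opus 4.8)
The plan is to show that when $p$ is small the infection is trapped, with overwhelming probability, inside a short-lived finite "effective cluster" around each infected site, so that it dies out before it can spread. The key observation is a comparison with a branching-type upper bound: from any infected site $x$, the number of sites it can ever reach is controlled by a percolation-like structure in space-time, and when $p$ is small this structure is subcritical. Concretely, I would first fix $v>0$ and run the environment $\zeta$ in stationarity. Consider the random set of edges that are available at time $0$ together with edges that open during a time window of length $T$; by a union bound, the probability that a given edge is ever available during $[0,T]$ is at most $p + vpT$, which is small (say $\le\tfrac12$ of what a subcritical oriented percolation needs) once $p$ is small enough, uniformly in $T$ on bounded scales — but since $T$ is not bounded, this crude bound is not quite enough and must be refined.

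The refinement I would use is a renewal/block argument. Partition time into blocks of length $\Delta$ (to be chosen depending on $v$ but not on $\lambda$). Within one block, an edge is available at some point with probability at most $q(\Delta)=p+vp\Delta$, and different edges behave essentially independently (they are independent as processes; the only coupling is through stationarity of the initial state). I would show that in a single block the infection, started from a single site, occupies an interval of sites whose length has an exponential tail with a rate that can be made as large as we like by taking $p$ small. The mechanism: for the infection to reach distance $k$ within a block it needs a connected path of $k$ edges each of which becomes available during the block, an event of probability $\le q(\Delta)^k$; combined with the fact that within a block the contact process itself (on a fixed finite graph) cannot travel faster than a Poisson number of steps, one gets that the reachable interval at the end of the block has size dominated by a geometric-type random variable with parameter $\to 0$ as $p\to 0$. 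Meanwhile, each infected site dies at rate $1$, so over a block of length $\Delta$ a positive fraction of infected sites recover. Chaining blocks, the total number of infected sites is dominated by a subcritical branching process (each infected site at the start of a block produces, by the end of the block, a number of infected sites with mean $<1$ when $p<p_0(v)$), which dies out almost surely and in fact in expected time $O(\log|\eta_0|)$ when started from $|\eta_0|$ individuals — this last step is the standard computation for the extinction time of a subcritical Galton–Watson process with many initial particles. This simultaneously yields part (a) (hence $\lambda_0(v,p)=\infty$, giving the immunity region and the curve $p_0(v)$, which one can take increasing in a neighborhood of $0$ by monotonicity of the relevant bound), and the logarithmic bound in part (b), with $\beta_0,\beta_1$ depending on $v,p$ but manifestly not on $\lambda$ since the dominating process was built without reference to $\lambda$ beyond the harmless observation that the contact process travels at finite speed.

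The main obstacle I anticipate is making the single-block domination genuinely uniform and independent of $\lambda$: a priori a very large $\lambda$ lets the infection exploit even a brief opening of an edge, and one must be careful that the "reachable interval within a block" bound does not degrade as $\lambda\to\infty$. The point is that reachability within a block is a property of the \emph{environment} alone — it only asks which edges ever open during the block — so the bound $q(\Delta)^k$ for reaching distance $k$ holds regardless of $\lambda$; what $\lambda$ controls is only how quickly, within that already-determined finite reachable interval, the infection fills it up, and this cannot increase the eventual extent. Turning this heuristic into a clean stochastic domination (choosing $\Delta$ so that the death of sites at rate $1$ beats the subcritical spread, and handling the mild dependence coming from the stationary initial configuration of $\zeta$, e.g. by absorbing it into the first block) is the technical heart of the argument; once it is in place, the branching-process extinction-time estimate is routine. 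I would also remark that the same block scheme, with $\Delta$ chosen instead as a function of $v$ tending to $\infty$ as $v\to 0$, underlies Theorem \ref{teo1}, which is why the two statements have the same form.
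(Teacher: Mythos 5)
Your plan is essentially the same as the paper's: fix $v$, divide time into blocks of length $T$ (the paper takes $T = M/v$ with $M = 2\max\{\varepsilon^{-1}, v\log(\varepsilon^{-1})\}$), use \emph{environment-only} events to quarantine sites within a block, note that the resulting dominating structure does not involve $\lambda$, and finally compare with a subcritical percolation-type process to get almost-sure extinction together with the logarithmic bound on $\E(\tau_{\text{ext}}|\eta_0)$. Your key insight --- that ``reachability within a block is a property of the environment alone, so the bound holds regardless of $\lambda$'' --- is exactly the reason $\lambda_0(v,p)=\infty$ in this regime, and matches the paper's use of the variables $V_{e,n} = \mathbf 1\{\zeta_t(e)=1 \text{ for some } t\in[nT,(n+1)T)\}$ and $U_{x,n} = \mathbf 1\{\mathcal R^x \cap [nT,(n+1)T)=\emptyset\}$, both manifestly $\lambda$-free.

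There are two places where your sketch underestimates the required work. First, you replace the paper's oriented-percolation comparison (graph $H$ on $\G\times\N$, process $Z$, and the FKG-based generating-function argument of Lemma~\ref{zdies}) with a straight branching-process domination. Offspring of distinct particles in a common block are not independent (they share both the environment and the recovery marks, and reachable clusters overlap), so a Galton--Watson domination does not come for free. This can be repaired --- the first-moment bound $\E(|Z_{n+1}|\,|\,Z_n)\leq\gamma|Z_n|$ holds by linearity without independence, and a union bound over the $|\eta_0|$ initial particles still gives $O(\log|\eta_0|)$ --- but as stated the branching comparison is a gap; the paper avoids it by working directly with the nearest-neighbour $U$, $V$ variables and FKG. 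Second, and more importantly, your handling of the temporal dependence of $\zeta$ is not adequate. You say the dependence ``coming from the stationary initial configuration'' can be ``absorbed into the first block'', but the issue is persistent: the law of $\zeta_{nT}(e)$, which determines $V_{e,n}$, depends on $V_{e,n-1},\ldots,V_{e,0}$ at every $n$, not just at $n=0$. The paper needs Proposition~\ref{propdesiV} precisely to establish the uniform conditional bound $\P(V_{e,n}=1\,|\,V_{e,n-1},\ldots,V_{e,0})\leq\varepsilon$, which is what makes the domination by i.i.d.\ Bernoulli variables legitimate. Without some analogue of this estimate the chaining of blocks in your argument does not close.
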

 
 Note that we are saying that, no matter how large we take $v$, a small enough density $p$ of open edges yields immunity.
 This may seem to be slightly counterintuitive, and in particular it seems to contradict our intuition for the case $v\gg1$, where we argued that $\lambda_0\sim\frac{\bar{\lambda}}{p}$. Notice, however, that in that argument we took $p$ fixed and $v\rightarrow\infty$ instead of $v$ fixed and $p\searrow0$ (see Figure \ref{fig1}). In the opposite direction, since small values of $v$ can be seen as hurting the infection (Theorem \ref{teo1}(a)), it is natural to ask whether for any $p$ one can find a small enough $v$ which guarantees immunity.
 The next theorem provides a (partial) negative answer to this question:
 
 \begin{theorem}
 	\label{teo3}
 	There exists $0<p_1<1$ such that for all $p\in(p_1,1)$, $\lambda_0(v,p)<\infty$ for all $v>0$.
 \end{theorem}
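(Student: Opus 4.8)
The plan is to split on the size of $v$. When $v>\bar\lambda/p$, Proposition~\ref{lambda1} already gives $\lambda_0(v,p)\le\hat\lambda(v,p)<\infty$; so it suffices to exhibit, for each fixed $v\in(0,\bar\lambda/p]$ and each $p$ close enough to $1$, some finite $\lambda=\lambda(v,p)$ for which the infection survives. I would obtain this from a block construction, comparing the CPDE with a supercritical finite-range oriented percolation on $\Z^2$, with the following choice of scales: a \emph{fixed} spatial block length $L_0\ge 2$, a temporal block length $T=C/v$ with $C$ a large constant, and an infection rate $\lambda=\lambda(v)$ that is sent to infinity as $v\downarrow0$. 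The reason $T$ must be of order $1/v$ is structural: each edge near the infection needs to update order-$C$ times within a single time block, so that an edge which happens to be closed (hence, for small $v$, closed for a long time of order $1/(vp)$) is likely to reopen within the block and free the infection. This is precisely the mechanism behind Theorem~\ref{teo1}(a), and it is why no \emph{fixed} $\lambda$ can work and $\lambda$ must grow as $v\downarrow0$.

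The block events are defined as usual. To $(m,k)\in\Z^2$ one associates the interval $J_{m,k}=\{mL_0-\ell_0,\dots,mL_0+\ell_0\}$ with $\ell_0$ a fixed multiple of $L_0$, and declares the oriented edge $(m,k)\to(m\pm1,k+1)$ open when the event $G^{\pm}_{m,k}$ holds: \emph{whenever some subinterval of $J_{m,k}$ of length $\ge L_0$ is fully infected at time $Tk$, some interval of length $\ge L_0$ with endpoints within $O(1)$ of $(m\pm1)L_0$ is fully infected at time $T(k+1)$}. This event is measurable with respect to the graphical construction of the CPDE inside the bounded space-time box $\{mL_0-\ell_0,\dots,(m\pm1)L_0+\ell_0\}\times[Tk,T(k+1)]$, so by monotonicity and translation invariance the family $\{G^{\pm}_{m,k}\}$ is stationary and depends on only a bounded number of neighbouring blocks; hence a standard comparison theorem gives survival of the CPDE with positive probability --- and therefore $\lambda_0(v,p)\le\lambda<\infty$ --- as soon as $\P(G^{\pm}_{m,k})>1-\delta_0$, where $\delta_0>0$ is the fixed margin above criticality of the relevant oriented percolation.

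The heart of the argument is the lower bound on $\P(G^{\pm}_{m,k})$, which I would get by intersecting three favourable events. First, \emph{the environment is good}: the $O(L_0)$ edges of the currently infected interval stay open throughout the block (probability $\ge(p\,e^{-v(1-p)T})^{O(L_0)}=(p\,e^{-C(1-p)})^{O(L_0)}\ge1-\delta_0/4$ for $p$ close to $1$), some boundary edge becomes open during the middle third of the block (probability $\ge1-e^{-vpT/3}=1-e^{-Cp/3}\ge1-\delta_0/4$ for $C$ large), and the freshly accessible interval beyond it has length $\ge L_0$ and stays open during the remainder of the block (probability $\ge1-\delta_0/4$ for $p$ close to $1$). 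Second, \emph{the infection does not die out in its interval during the block}: by the elementary bound that the contact process with rate $\lambda$ on an interval of length $L_0\ge2$ started from full occupation has extinction time of expectation at least $c\lambda$, together with the fact that this time dominates a geometric number of i.i.d.\ excursions, survival over a window of length $T=C/v$ has probability at least $1-C/(cv\lambda)$, which exceeds $1-\delta_0/4$ once $\lambda\ge 4C/(cv\delta_0)$ --- a threshold that blows up as $v\downarrow0$ but is finite for every fixed $v$. Third, \emph{the infection advances}: once a boundary edge is open, at rate $\lambda$ the infection crosses it and fills the fresh length-$L_0$ interval in time $O(L_0/\lambda)\ll T$ and then survives there for the rest of the block, with probability $1-O(1/\lambda)\ge1-\delta_0/4$ for $\lambda$ large. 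Choosing first $C$ large, then $p_1<1$ close enough to $1$, and finally $\lambda=\lambda(v)$ large enough, all three probabilities exceed $1-\delta_0/4$, so $\P(G^{\pm}_{m,k})>1-\delta_0$ and the comparison closes.

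I expect the main obstacle to lie in the second and third favourable events above: making $G^{\pm}_{m,k}$ precise while chaining "survive inside the current interval for a time of order $1/v$" with "cross an edge that opens somewhere inside the block and colonise the adjacent interval", keeping everything measurable with respect to a bounded space-time region, and --- crucially --- handling the fact that the environment restricted to consecutive time blocks is not independent (its temporal mixing time is itself of order $1/v$), which forces either a comparison theorem tolerating weak dependence between blocks or an explicit decoupling step using that the block-to-block correlation decays by a factor $e^{-\Theta(C)}$.
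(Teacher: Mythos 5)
Your overall strategy matches the paper's: a block construction comparing the CPDE with oriented percolation, time blocks of length $T\sim 1/v$, $p$ pushed close to $1$, and $\lambda=\lambda(v)$ sent to $\infty$ as $v\downarrow 0$. But there is a genuine gap, which you yourself flag in your last paragraph and do not resolve: your block events depend on the \emph{state} $\zeta_{Tk}$ at the start of the block (e.g.\ ``edges stay open throughout the block'' uses that they are open at $Tk$ with probability $p$), and $\zeta_{Tk}$ depends on the entire history, so consecutive time blocks are not independent. You correctly note that the environment's temporal mixing time is of order $1/v$, comparable to the block length, so this is not a small perturbation one can simply ignore.

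The paper avoids this obstacle entirely with a cleaner choice of good-block event. Conditions (c1)--(c4) there are formulated purely in terms of the Poisson marks $\mathcal{O}^e,\mathcal{C}^e,\mathcal{R}^x,\mathcal{I}^e$ restricted to the slab $[nT,(n+1)T)$ --- never in terms of $\zeta_{nT}$. In particular, (c1) asks that every edge in the block has an \emph{opening mark} during the slab and (c2) asks that no edge has a \emph{closing mark} during the slab. Because Poisson marks on disjoint time slabs (and disjoint edges/vertices) are independent, the block events $\mathcal{W}_{k,n}$ are genuinely i.i.d., and the oriented-percolation comparison goes through with no decoupling step. Moreover, (c1) together with (c2) gives a \emph{deterministic} guarantee: any edge that is open at $nT$ (inherited inductively from the previous good block) stays open, any edge that is closed opens and stays open, so all edges in $\mathfrak{I}_{k,n}$ are open by time $(n+1)T$; combined with (c3)--(c4), this yields deterministic propagation of the infection to the entire block. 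Your version replaces this determinism with two probabilistic estimates (``survive inside the current interval'' and ``cross and colonise a fresh interval''), which is workable in principle but requires more care than you give, and --- more importantly --- still leaves the block events coupled through $\zeta_{Tk}$.

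So: right scaffolding and scaling, but the dependence issue you identify is the actual obstacle, and the missing idea is to formulate the block events via the Poisson marks of the graphical construction (requiring opening marks and forbidding closing marks in the slab) rather than via the environment state, which simultaneously gives independence across blocks and a deterministic-propagation argument.
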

 
 From Proposition \ref{monotoneprop} we know that $\lambda_0(v,p)=\infty$ implies $\lambda_0(v',p')=\infty$ for all $v'\leq v$ and $p'\leq p$.
 As a consequence of this and Theorems \ref{teo2} and \ref{teo3}, we obtain the following:
 
 \begin{corollary}
 	There exists $p_1\in(0,1)$ such that for every $p>p_1$, $\lambda_0(v,p)<\infty$ for all $v>0$, while for every $p<p_1$, there is a $v>0$ with $\lambda_0(v,p)=\infty$.
 \end{corollary}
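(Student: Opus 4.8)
The plan is to extract the threshold $p_1$ directly from Theorems \ref{teo2} and \ref{teo3}, using Proposition \ref{monotoneprop} together with the already-noted monotonicity of $\lambda_0$ in $p$ only to guarantee that the set of ``immune-achievable'' densities is an interval, so that its supremum is a genuine threshold separating the two regimes. Concretely, I would set
\[
A\;=\;\big\{p\in(0,1):\lambda_0(v,p)=\infty\text{ for some }v>0\big\},\qquad p_1\;=\;\sup A,
\]
and then check three things: that $A$ is a down-set, that $p_1>0$, and that $p_1<1$.

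First I would observe that $A$ is downward closed: if $p\in A$, witnessed by some $v>0$ with $\lambda_0(v,p)=\infty$, then for any $p'<p$ the monotonicity of $\lambda_0$ in its density argument gives $\lambda_0(v,p')\geq\lambda_0(v,p)=\infty$, so $p'\in A$ with the same speed $v$; hence $A=(0,p_1)$ or $A=(0,p_1]$. Next, $p_1>0$: fixing any $v>0$, Theorem \ref{teo2}(a) produces $p_0(v)\in(0,1)$ with $\lambda_0(v,p)=\infty$ for every $p<p_0(v)$, so $(0,p_0(v))\subseteq A$ and thus $p_1\geq p_0(v)>0$. Finally, $p_1<1$: Theorem \ref{teo3} provides some $\hat p\in(0,1)$ such that $\lambda_0(v,p)<\infty$ for all $v>0$ whenever $p>\hat p$, hence $A\cap(\hat p,1)=\emptyset$ and therefore $p_1\leq\hat p<1$. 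Thus $p_1\in(0,1)$, as required.

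It then remains to read off the two assertions. If $p>p_1$ then $p\notin A$, which by the definition of $A$ means exactly that $\lambda_0(v,p)<\infty$ for every $v>0$. If $p<p_1=\sup A$, pick $p'\in A$ with $p<p'$; then some $v>0$ satisfies $\lambda_0(v,p')=\infty$, and monotonicity in the density gives $\lambda_0(v,p)\geq\lambda_0(v,p')=\infty$, so that same $v$ witnesses $\lambda_0(v,p)=\infty$. I do not expect a genuine obstacle here: everything is a bookkeeping consequence of Theorems \ref{teo2} and \ref{teo3} plus monotonicity, and the only point needing a little care is that $p_1$ actually acts as a threshold — which is precisely what the down-set property delivers — together with the observation that the behaviour exactly at $p=p_1$ is left undetermined (the statement is phrased to sidestep it).
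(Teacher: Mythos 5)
Your argument is correct and takes essentially the same route as the paper, which states the corollary as an immediate consequence of Theorems \ref{teo2} and \ref{teo3} together with monotonicity; you have simply made the bookkeeping explicit by defining $A=\{p:\exists\,v>0,\ \lambda_0(v,p)=\infty\}$ and $p_1=\sup A$. One minor observation worth recording: your proof invokes only the elementary monotonicity of $\lambda_0$ in $p$, whereas the $v$-monotonicity coming from Proposition \ref{monotoneprop} (which the paper cites here) is not actually needed for the corollary as stated --- it underlies instead the paper's parenthetical identification $p_1=\lim_{v\to0}p_0(v)$.
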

 
 In words, there exists a threshold parameter $p_1$ (note $p_1=\lim_{v\to0}p_0(v)$ with $p_0(v)$ as in Theorem \ref{teo2}) which separates the scenario where there is immunity for small $v$, and the one where immunity cannot occur in the network. From these considerations, one expects the upper boundary of $\mathfrak{I}$ to be the graph of a decreasing function, see Figure \ref{fig1}.
 \begin{figure}[h!]
 	\centering
 	\begin{tikzpicture}[line cap=round,line join=round,>=triangle 45,x=0.8cm,y=0.8cm]
 	
 	\node  at (0.0,2.5)[left]{$p$};
 	\fill [yellow, domain=0:4.85, variable=\x]
 	(0, 0)
 	-- plot ({\x}, {6.4/(\x+2)^2})
 	-- (4.85, 0)
 	-- cycle;
 	\draw [line width=1.2pt] (0.0,0.0)-- (0.0,2.5);
 	\draw [line width=1.2pt,->] (-0.05,0.0)node[left]{$0$}-- (5.2,0.0)node[right]{$v$};
 	\draw [line width=1pt] (0.0,2.0) -- (-0.05,2.0)node[left]{$1$};
 	\draw [line width=0.7pt, dashed] (0.0,1.6)node[left]{$p_1$} -- (4.85,1.6);
 	\node at (0.6,0.4){$\mathfrak{I}$};
 	\end{tikzpicture}
 	\caption{Expected shape of $\mathfrak{I}$.}
 	\label{fig1}
 \end{figure}
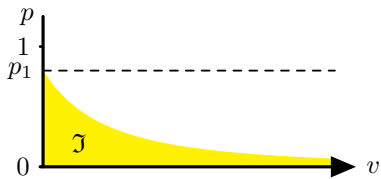

 \subsection{Extensions to vertex-transitive regular graphs}\label{sec:genG}

 Let $(\G,E)$ be any infinite vertex-transitive graph with (finite) constant degree.
 The CPDE can be defined on $\G$ exactly as in Section \ref{sec:CPDEonZ}, and the definition of $\lambda_0(v,p)$ is the same in this case.
 As we mentioned at the end of the introduction, most of our results are valid in this more general case.
 We state this as follows:

 \begin{theorem}
 All of the results of Section \ref{sec:results} except for Theorem \ref{teo1} are valid for the CPDE running on $\G$ (after replacing the parameter $\bar\lambda$ by the critical value $\clambda$ of the static contact process running on $\G$).
 \end{theorem}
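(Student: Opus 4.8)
The plan is to revisit the results of Section~\ref{sec:results} one at a time and verify that each proof relies only on structural features shared by every infinite vertex-transitive graph $\G$ of finite degree --- transitivity, bounded degree, and the finiteness of the critical value $\clambda$ of the static contact process on $\G$. The last point is immediate: $\G$ is infinite, connected and locally finite, hence (by König's lemma) contains an infinite self-avoiding path $R$, and since the contact process on $\G$ at rate $\lambda$ dominates the one on the subgraph $R$, which survives for $\lambda$ large, we get $\clambda<\infty$. That the CPDE on $\G$ is a well-defined monotone Feller process and that $\lambda_0(v,p)$ does not depend on the (finite, nonempty) initial condition follow from the standard arguments recalled in Section~\ref{sec:CPDEonZ}, which use only boundedness of the degree.

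Proposition~\ref{monotoneprop} is a pure time-change argument with no reference to the underlying graph, so it goes through verbatim: given $v_1<v_2$ and $\lambda>\frac{v_2}{v_1}\lambda_0(v_1,p)$, the surviving process $\P_{\frac{v_1}{v_2}\lambda,\,v_1,\,p}$, after the linear time rescaling that turns the environment speed into $v_2$, becomes a CPDE with environment speed $v_2$ and infection rate $\lambda$ but recovery rate $\frac{v_2}{v_1}>1$, hence is stochastically dominated by $\P_{\lambda,v_2,p}$; time-change preserves the survival event, so $\P_{\lambda,v_2,p}$ survives and $\lambda_0(v_2,p)\le\frac{v_2}{v_1}\lambda_0(v_1,p)$. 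For Proposition~\ref{lambda1}, domination from above by the contact process of rate $\lambda$ is obtained by coupling the environment to the all-available configuration, which makes sense on any graph; domination from below by the contact process of rate $\beta(\lambda,v,p)$ rests on Broman's theorem \cite{broman2007} for the contact process in a randomly evolving environment, whose hypotheses (an independent stationary two-state environment on each edge, the usual contact dynamics) hold on $\G$, and the effective rate $\beta(\lambda,v,p)$ is computed edge-locally and is therefore unchanged. Solving $\clambda\le\beta(\lambda,v,p)$ for $\lambda$ gives $\hat\lambda(v,p)$ with $\clambda$ in place of $\bar\lambda$, and the one-line computation in \eqref{eq:lc} then yields $\limsup_{v\to\infty}\lambda_0(v,p)\le\clambda/p$.

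For Theorem~\ref{teofast} the upper bound is exactly \eqref{eq:lc}; the matching lower bound, extinction when $\lambda p<\clambda$ and $v$ is large, is the fast-environment averaging estimate: over a mesoscopic time window each edge is available for a fraction close to $p$ of the time and, since $v$ is large, its states across disjoint windows are nearly independent, so on a suitably coarse-grained scale the infection is dominated by a subcritical contact process of rate close to $\lambda p$; the coarse-graining, and the comparison with oriented percolation it feeds into, use only transitivity and bounded degree. Theorem~\ref{teo2} (immunity together with the logarithmic bound on the expected extinction time for $p<p_0(v)$) is proved by comparison with an auxiliary process that depends only on each edge being available with small probability $p$ and updating at rate $v$; the comparison is local, and the $\log|\eta_0|$ bound needs only that the number of sites the infection can ever reach grows at a controlled rate in $|\eta_0|$, which is again a bounded-degree statement. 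Theorem~\ref{teo3} ($p$ close to $1$) is proved by comparing the CPDE from below, for $\lambda$ large, with a supercritical contact process on $\G$ (which survives because $\clambda<\infty$), the rare and short-lived unavailable edges being absorbed as local perturbations in the standard block construction. Finally the Corollary follows formally from Proposition~\ref{monotoneprop} together with Theorems~\ref{teo2} and~\ref{teo3}, exactly as on $\Z$, with $p_1=\lim_{v\to0}p_0(v)$.

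The main obstacle is making the renormalization arguments behind the lower bound of Theorem~\ref{teofast} and behind Theorem~\ref{teo3} run on a general vertex-transitive graph: the comparison-with-oriented-percolation machinery for the contact process is classically set up on $\Z^d$, so one has to choose the coarse-graining appropriately --- for instance along a bi-infinite geodesic, or by exploiting the transitive group action --- and recheck that the block and restart estimates it requires use only finiteness of the degree and transitivity, never the specific geometry of $\Z$. A secondary point is to confirm that Broman's domination result is available in the needed generality (or to supply the short edge-local argument directly). With these verifications in place, every statement of Section~\ref{sec:results} other than Theorem~\ref{teo1} transfers to $\G$ with $\bar\lambda$ replaced by $\clambda$ and no essential change of argument.
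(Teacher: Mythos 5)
The proposal is broadly on target for Propositions \ref{monotoneprop} and \ref{lambda1} (both are indeed edge-local or pure time-change arguments and go through unchanged) and for Theorem \ref{teo2}, whose proof the paper carries out directly on $\G$ with an isolated-vertex block construction. However, there are two substantive mismatches with the paper.

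First, and most importantly, you propose to prove Theorem \ref{teo3} on $\G$ by ``absorbing the rare unavailable edges as local perturbations in the standard block construction,'' and then honestly flag this as ``the main obstacle'' since the oriented-percolation machinery is classically set up on $\Z^d$. You never resolve that obstacle. The paper sidesteps it entirely: since every infinite vertex-transitive graph of finite degree contains a bi-infinite self-avoiding path, i.e.\ an isomorphic copy of $\Z$, and since the infection process on $\G$ stochastically dominates the CPDE restricted to that $\Z$-subgraph (by monotonicity, simply declare all other edges absent), it suffices to prove Theorem \ref{teo3} on $\Z$, which the paper does. This observation replaces the hard renormalization you propose with a one-line reduction, so your route for Theorem \ref{teo3} is not just different but genuinely incomplete as stated.

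Second, for the matching lower bound in Theorem \ref{teofast} you again invoke a coarse-graining and comparison with oriented percolation. That is not what the paper does. The paper's argument (Lemma \ref{keyrestricted} and the surrounding constructions of $\eL$, $\ec$, and the times $\tau_k$) is a soft comparison: it builds an auxiliary process $\ec$ that is a bona fide static contact process on $\G$ at rate $\lambda p$, shows $\E(N_p)\to 0$ as $v\to\infty$, and uses the Aizenman--Jung integrability result for subcritical contact processes on vertex-transitive graphs, $\int_0^\infty\E|A_t|\,dt<\infty$. No block construction or oriented percolation enters, and the argument is formulated on general $\G$ from the start. Your description of the method is therefore incorrect, though your conclusion that the argument transfers to $\G$ is right for a reason you did not give. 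If you were to actually carry out your proposed block argument for \ref{teofast} on general $\G$, you would face the same unresolved geometric difficulty you flagged for \ref{teo3}.
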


We will prove the results mentioned in this theorem in Section \ref{proofs} in the more general setting of vertex-transitive regular graphs.
The only exception is Theorem \ref{teo3} which we will only prove on $\Z$, since it is enough to do so because any infinite vertex-transitive graph $\G$ contains a copy of $\Z$.

Our proof of Theorem \ref{teo1}, on the other hand, uses one-dimensional methods, so it cannot be extended directly to general $\G$.
We believe, although we have not pursued this any further, that a version of (a) in that result should hold in the general setting; more precisely, one expects that as $v\to0$ the critical parameter $\lambda_0(v,p)$ should converge to the critical $\lambda$ for the (static) contact process running on an edge percolation cluster on $\G$ with parameter $p$ (note that the critical $p$ for edge percolation on $\Z$ is $p_c=1$, so this is consistent with Theorem \ref{teo1}(a)).

 \section{Proofs}
 \label{proofs}
 
 \subsection{Graphical representation}
 \label{graphical}
 
 In this section, we provide an equivalent description of our model by a convenient graphical representation with the help of the following independent Poisson point processes on $(0,\infty)$:
 \begin{itemize}
 	\item $\{\mO^e\}_{e\in E}$ and $\{\mC^e\}_{e\in E}$, with intensities $vp$ and $v(1-p)$ respectively. These represent the opening and closing events of the edge $e$. We also consider $\mU^e=\mO^e\cup\;\mC^e$, the updating events of $e$. 
 	\item $\{\mI^e\}_{e\in E}$, with intensity $\lambda$. These represent potential infection events along the edge $e$.
 	\item $\{\mR^x\}_{x\in\Z}$, with intensity 1. These represent recovery events.
 \end{itemize}
 We construct the environment process $\zeta_t$ by choosing $\zeta_0$ according to a Bernoulli product measure $\pi$ with parameter $p$, and then setting $\zeta_t(e)=1$ if and only if the last updating event in $\mU^e\cap[0,t]$ was in $\mO^e$ (or if $\zeta_0(e)=1$ in case the intersection is empty). The infection process $\eta_t$ is then constructed in the usual manner (see \cite[Sec. 3.6]{IPS1}), using the recovery events $\mR^x$ and the \textit{valid} infection events $\mbI^e=\{t\in \mI^e,\;\zeta_t(e)=1\}$ as follows: consider the \textit{graphical space} $\Z\times\R^+\subseteq\R\times\R^+$, with the addition of horizontal segments of the form $[x,x+1]\times\{t\}$ with $t\in\mbI^{\{x,x+1\}}$. By assigning a $*$ symbol at each point $(x,t)$ with $x\in\Z$ and $t\in \mR^x$, we say that a continuous path $P$ in this space is {\it valid} if it does not contain $*$ symbols and if the second component is non-decreasing. We define $\eta_t$ as the set of all $z\in\Z$ such that there is a valid path $P$ from initially infected sites to $(z,t)$.
 
 This construction will be used persistently throughout the paper, providing simple and intuitive couplings between the CPDE and other processes which are easier to analyze.
 
 \subsection{Proof of Proposition \ref{monotoneprop}}
 
 Suppose that the infection process survives with positive probability for given $\lambda,v,p$ and take any $v'>v$; rescaling time by a factor of $v/v'$ gives a process constructed in the same way as the CPDE but where the Poisson point processes $\mI^e,\;\mO^e,\;\mC^e$ and $\mR^x$ have intensities $\frac{v'}{v}\lambda,\;v'(1-p),\;v'p$ and $v'/v$ respectively. Since $v'/v>1$, we can couple $\mR^x$ with a Poisson point process $\bar{\mR}^x$ with intensity rate 1 in such a way that $\bar{\mR}^x\subseteq\mR^x$; the process constructed with $\bar{\mR}^x$ instead of $\mR^x$ is a CPDE with parameters $\frac{v'}{v}\lambda,\;v'$ and $p$. From the coupling it is obvious that survival is easier when replacing ${\mR}^x$ by $\bar{\mR}^x$, so $\lambda_0(v',p)\leq\frac{v'}{v}\lambda$, and since this inequality holds for all $\lambda>\lambda_0(v,p)$ it holds for $\lambda_0(v,p)$ as well, giving the result.
 
 \subsection{Proof of Proposition \ref{lambda1}}
 
 The lower bound for $\lambda_0(v,p)$ comes simply from comparing with $\lambda_0(v,1)$, or in other words with a contact process constructed directly using $\{\mI^e\}_{e\in E}$ and $\{\mR^x\}_{e\in E}$ respectively as the infection and recovery events. For the contact process dominating the CPDE from below, which yields the upper bound, fix any $e\in E$ and use the events $\{\mO^e\}_{e\in E}$, $\{\mC^e\}_{e\in E}$, and $\{\mI^e\}_{e\in E}$ in the graphical representation to construct a process $\{(\zeta_t(e),N_t(e))\}_{t\geq 0}$ where $\zeta_t(e)$ is the environment defined above and $N_t(e)=|\mbI^e\cap[0,t]|$ is the number of valid infections occuring at $e$ up to time $t$. It can be easily checked that this process is Markov and its transition rates are of the form
 \begin{align*}
 (0,k)&\longrightarrow (1,k)&&\hskip-1.2in \mbox{ at rate }\quad vp,\\
 (1,k)&\longrightarrow(0,k)&&\hskip-1.2in \mbox{ at rate }\quad v(1-p),\\
 (0,k)&\longrightarrow (0,k+1)&&\hskip-1.2in \mbox{ at rate }\quad 0,\\
 (1,k)&\longrightarrow(1,k+1)&&\hskip-1.2in \mbox{ at rate }\quad \lambda.
 \end{align*}
 From \cite[Thm. 1.4]{broman2007}, $(\zeta_t(e),N_t(e))$ can be coupled with a Poisson process $P_t(e)$ with intensity $\beta(\lambda,v,p)$, in such a way that every jump of $P_t(e)$ coincides with a jump of $N_t(e)$. It follows that $P_t(e)$ defines a Poisson process $\mathcal{P}^e\subseteq \mbI^e$ and from the independence of the processes across edges, the family $\{\mathcal{P}^e\}_{e\in E}$ is independent. The result follows from constructing a contact process with the $\mathcal{P}^e$ marking infection events and the $\mathcal{R}^x$ marking recoveries.
 \begin{remark}
 	A similar argument appears in the proof of Theorem 1(c) of the published version of \cite{Remenik}, but unfortunately that proof is flawed, and in fact the argument cannot be applied in the setting of that paper (see the updated arXiv version cited in \cite{Remenik}).
 \end{remark}
 
 \subsection{Proof of Theorem \ref{teofast}}
 
 Fix $p\in(0,1]$. From Proposition \ref{lambda1} we already deduced that if $\lambda>\bar{\lambda}/p$, then for any $v$ sufficiently large the process survives, so to prove Theorem \ref{teofast} we need only to show that if $\lambda<\bar{\lambda}/p$, then for $v$ large enough the CPDE dies out. The key idea of the proof is that if we fix an edge $e\in E$ and call $t_1,t_2,\dotsc$ the elements of $\mI^e$ in increasing order, then if $v$ is large most intervals $(t_i,t_{i+1})$ will contain an updating event $s\in \mU^e$, and, conditional on that, the infection $t_{i+1}$ is valid with probability $p$ independently of all previous infection events (and from the ones taking place at different edges). At a heuristic level, this means that we can treat $\eta_t$ as the usual contact process on $\G$ with rate $\lambda p<\clambda$, which is subcritical.
 
 In order to turn this heuristic into an actual proof we need to control the infection events that do not satisfy the property stated above and show that these cannot account for survival of $\eta_t$. We keep track of these infections with the aid of a sequence of processes $(F_t(e))_{e\in E}$ defined as follows: 
 \begin{defn}
 	For each $e\in E$ define a {\cad} process $F_t(e)$ with values in $\{0,1\}$ which starts with $F_0(e)=0$ and jumps at times $t\in \mI^e\cup \mU^e$ with
 	\[F_{t}(e)\;=\;\begin{dcases*}0& if $t\in\mI^e$,\\1& if $t\in\mU^e$.\end{dcases*}\]
 	We say that at time $t>0$ the edge $e$ is \textit{fresh} if $F_{t^-}(e)=1$.
 \end{defn}
 In other words, $F_t$ serves as an indicator function of the set of edges whose latest event in $[0,t)$ is an update. In order to control the infections taking place at unrefreshed edges we will actually work with a ``worst-case scenario'' process $\eL$ in which all infections taking place at unrefreshed edges are treated as valid.
 
 \begin{defn}
 	We say that an infection event $t\in\mI^e$ is \textit{weakly valid} if either $F_{t^-}(e)=0$ or $F_{t^-}(e)=1$ and $t\in\mbI^e$. \textit{Weakly valid paths} are defined analogously to valid paths in Section \ref{graphical}, but instead of using only valid infections for the paths to move horizontally, we use weakly valid ones. The process $(\eL_t)_{t\geq0}$ is defined analogously to the CPDE starting with an initially infected site at $\{0\}$, where $\eL_t(x)=1$ if and only if there is a weakly valid path from $(0,0)$ to $(x,t)$.
 \end{defn}
 
 Any valid infection is also weakly valid, so $\eta_t\leq\eL_t$, and hence in order to show that $\eta_t$ dies out it suffices to show that $\eL_t$ does. We will do this by studying a third process $\ec$ which we define using an extension of the graphical construction, and which evolves as the desired contact process with rate $\lambda p$:
 
 \begin{defn}
 	Consider an enlarged version of the graphical construction in which we split each $\mI^e$ into two independent Poisson processes, $\mIA^e$ and $\mIR^e$ with rates $\lambda p$ and $\lambda(1-p)$ respectively. We say that an infection event $t\in\mI^e$ is \textit{$p$-weakly valid} if either $F_{t^-}(e)=1$ and $t\in\mbI^e$, or if $F_{t^-}(e)=0$ and $t\in\mIA^e$. In words, an infection taking place at a fresh edge is $p$-weakly valid if valid, and at an unrefreshed edge we flip a coin to decide.
 	\textit{$p$-weakly valid} paths are defined analogously to weakly valid paths, and with them we construct a process $(\ec_t)_{t\geq0}$ analogously to $\eL$, starting with an initially infected site at $\{0\}$.
 \end{defn}
 
 It follows from its definition that any infection event is $p$-weakly valid with probability $p$ independently of all previous infections so that, as desired, $\ec$ evolves as the (usual) contact process with rate $\lambda p$. On the other hand, notice that every $p$-weakly valid infection is also weakly valid, so the processes above satisfy $\ec_t\leq\eL_t$ for each $t\geq0$; in fact, the two processes essentially drift apart only at times $\tau_1,\tau_2,\dotsc$ at which $\eL$ propagates to healthy sites but $\ec$ does not. We formally introduce said times as follows:
 
 \begin{defn}\label{tiempos}
 	Take $\tau_0=0$ and for each $k\geq 1$ define
 	\[\tau_{k}\,=\;\inf\{t>\tau_{k-1},\;\exists e=\{x,y\}\in E,\;\ec_t(x)=0,\,\ec_t(y)=1\text{ with }F_{t^-}(e)=0\text{ and }t\in\mIR^e\}.\]
 	We use the notation $x_k$ to denote the vertex in $e$ at time $\tau_k$ for which $\ec_{\tau_k}(x)=0$. We also use the notation $N_p$ to refer to the largest $k\in\N\cup\{\infty\}$ such that $\tau_k<\infty$.
 \end{defn}
 
 Observe that any $\tau_k$ is associated to an infection taking place at an unrefreshed edge, so as $v\to\infty$ we expect to have $N_p=0$, which at a heuristic level would give $\eL\approx\ec$. This in turn allows us to control $\eL$, since we already know that $\ec$ behaves as a subcritical contact process from our choice of $\lambda$. The next lemma, which formally states our claims about $N_p$ and $\ec$, will be key in the proof of the theorem: 
 
 \begin{lemma}\label{keyrestricted}
 	Fix $\lambda<\bar{\lambda}/p$. For any $\zeta_0\in\{0,1\}^E$ we have
 	\begin{equation*}\label{death_er}
 	\P_{\zeta_0}(\ec_t\neq\emptyset\,\,\,\forall t>0)\;=\;0,
 	\end{equation*}
 	where $\P_{\zeta_0}$ stands for the law of the process with initial environment configuration $\zeta_0$. Furthermore,
 	\begin{equation*}
 	\lim_{v\to\infty}\sup_{\zeta_0}\E_{\zeta_0}(N_p)=0,
 	\end{equation*}
 	where the supremum is taken over all initial configurations $\zeta_0\in\{0,1\}^E$.
 \end{lemma}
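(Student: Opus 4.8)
The plan is to treat the two assertions separately. For the first, I would note that the construction of $\ec$ does not involve $\zeta_0$: the environment enters the notion of $p$-weak validity only through the condition $t\in\mbI^e$ in the case $F_{t^-}(e)=1$, but if $e$ is fresh at $t$ then the last ring of $\mI^e\cup\mU^e$ before $t$ is an update, so $\zeta_t(e)$ is decided by whether that update lies in $\mO^e$ or $\mC^e$, never by $\zeta_0$. Combined with the observation made just before the lemma---that each infection event is $p$-weakly valid with probability exactly $p$, independently of everything that came before, the relevant $\mO/\mC$ or $\mIA/\mIR$ labels acting as independent Bernoulli$(p)$ variables---this identifies $\ec$ as the contact process on $\Z$ with infection rate $\lambda p$ started from $\{0\}$, whose law does not depend on $\zeta_0$. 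Since $\lambda p<\bar\lambda$, this process is subcritical and therefore dies out almost surely, which is the first identity (trivially uniform in $\zeta_0$).

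For the second assertion, write $A_e(t)$ for the event that $\ec_{t^-}$ has exactly one infected endpoint along $e$. By Definition \ref{tiempos}, the times $\tau_1,\tau_2,\dots$ are exactly the successive times $t$ at which some edge $e$ satisfies $t\in\mIR^e$, $F_{t^-}(e)=0$ and $A_e(t)$, so $N_p$ equals the number of such pairs $(e,t)$. Applying the compensation formula to the Poisson processes $\mIR^e$ (intensity $\lambda(1-p)$) against the predictable integrand $\one_{\{F_{t^-}(e)=0\}}\one_{A_e(t)}$ gives
\[
\E_{\zeta_0}(N_p)\;=\;\lambda(1-p)\sum_{e\in E}\int_0^\infty\P_{\zeta_0}\!\big(F_{t^-}(e)=0,\ A_e(t)\big)\,\de t .
\]
The heart of the matter is that $\{F_{t^-}(e)=0\}$---the event that the last ring of the clocks $\mI^e\cup\mU^e$ before $t$ was an infection rather than an update---has probability essentially $\tfrac{\lambda}{\lambda+v}$ and is nearly independent of $A_e(t)$, which heuristically gives $\E_{\zeta_0}(N_p)\approx\tfrac{\lambda^2(1-p)}{\lambda+v}\sum_e\int_0^\infty\P_{\zeta_0}(A_e(t))\,\de t\le\tfrac{2\lambda^2(1-p)}{\lambda+v}\int_0^\infty\E_{\zeta_0}|\ec_t|\,\de t$, using that on $\Z$ each infected site lies on exactly two edges. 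Since $\ec$ is the subcritical contact process of rate $\lambda p$ (its law, again, independent of $\zeta_0$), the constant $C:=\int_0^\infty\E|\ec_t|\,\de t$ is finite, so the right-hand side tends to $0$ as $v\to\infty$.

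The step that requires care is precisely the near-independence above: $A_e(t)$ depends on the configuration of $\ec$ around $e$ just before $t$, which in turn depends weakly on how recently $\mI^e$ last rang (a longer silence of $\mI^e$ before $t$ leaves $\ec$ more time to evolve there), so the two events are not genuinely independent. I would make the argument rigorous by fixing a small $\e>0$ and using the inclusion $\{F_{t^-}(e)=0\}\subseteq\{\mI^e\text{ rings in }(t-\e,t)\}\cup\{\mU^e\text{ silent on }(t-\e,t)\}$. On the second event, conditioning on all the driving data outside the window $(t-\e,t)$---away from which $\ec$ never consults $\zeta(e)$, as $\mI^e$ is silent there---the absence of a $\mU^e$-ring in the window has probability $e^{-v\e}$ and is independent of $A_e(t)$, contributing at most $e^{-v\e}\,\lambda(1-p)\sum_e\int_0^\infty\P_{\zeta_0}(A_e(t))\,\de t\le 2\lambda(1-p)\,e^{-v\e}C$; on the first event one bounds $\P(\mI^e\text{ rings in }(t-\e,t),\,A_e(t))$ by distinguishing whether an endpoint of $e$ is already infected at time $(t-\e)^-$---independent of the rings of $\mI^e$ in the window---or first becomes infected during it---which occurs at rate $O(\lambda)$ and only when $\ec$ is near $e$---and, using once more the finiteness of $\int_0^\infty\E|\ec_t|\,\de t$, this contributes a quantity that is $O(\e)$ uniformly in $v$. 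Letting first $\e\to0$ and then $v\to\infty$ yields $\sup_{\zeta_0}\E_{\zeta_0}(N_p)\to0$. The only ingredient from outside the paper is the finiteness of $\int_0^\infty\E|\ec_t|\,\de t$ for the subcritical contact process, which follows from the exponential decay of $\E|\ec_t|$ in the subcritical phase; on a general infinite vertex-transitive regular graph $\G$ the whole argument is identical, with $\bar\lambda$ replaced by $\clambda$ and the constant $2$ by the degree of $\G$.
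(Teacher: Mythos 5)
Your argument reaches the same conclusions as the paper but by a genuinely different route in the second half, with one small slip. For the first assertion your observation that with $F_0\equiv0$ the process $\ec$ does not depend on $\zeta_0$ at all (so that it is literally the rate-$\lambda p$ contact process on $\G$) is correct and cleaner than the paper's proof, which instead proves a stronger statement valid for \emph{arbitrary} $F_0$ and $\zeta_0$ by coupling with an SI process over a short initial window and then with a rate-$\lambda(p+\varepsilon)$ contact process; the paper wants this generality because the strong Markov property at $\tau_k$ produces an arbitrary $F_{\tau_k}$, even though in the end the authors also reduce to $F_0\equiv0$ by monotonicity. For the second assertion the paper decomposes $\E(N_p)$ over the confinement events $A_n$, bounds $N_p$ on $A_n$ by the number $M_n$ of non-fresh $\mI^e$-rings inside $B(0,n)\times[0,n]$, shows $\E(M_n)\to0$ for fixed $n$ by splitting the index of the $\mI^e$-clock at $\sqrt v$, and then handles the tail by a dominating variable $\bar N_p$ whose mean is controlled via the compensation formula; you instead apply the compensation formula directly to $N_p$ and split on whether $\mI^e$ rings in the window $(t-\varepsilon,t)$ or $\mU^e$ is silent there. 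Both rest on the same essential ingredient, $\int_0^\infty\E|\ec_t|\,\de t<\infty$; note that the paper invokes the Aizenman--Jung susceptibility result, which is precisely what is needed and is cleaner than appealing to exponential decay of $\E|\ec_t|$, which on a general vertex-transitive $\G$ is less elementary than on $\Z$.

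Two caveats. First, the estimate of the $\mI^e$-rings contribution is the most delicate step and deserves a full write-up: in the sub-case where an endpoint is already infected at $(t-\varepsilon)^-$ the factor $\varepsilon$ comes from $\P(\mI^e\cap(t-\varepsilon,t)\neq\emptyset)\le\lambda\varepsilon$ (using independence of $\mI^e\cap(t-\varepsilon,t)$ from $\mathcal F_{(t-\varepsilon)^-}$), while in the sub-case where an endpoint first becomes infected during the window the factor $\varepsilon$ comes from the window length attached to each $0\to1$ transition of $\ec$, and the summability over $e$ and integrability over $t$ must be supplied respectively by $\int_0^\infty\E|\ec_t|\,\de t$ and by the finiteness of the expected total number of $\ec$-infection events; as sketched, the reader has to fill in more than is comfortable. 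Second, your final sentence has the limits in the wrong order: the bound you obtain is of the form $\E_{\zeta_0}(N_p)\lesssim e^{-v\varepsilon}C+O(\varepsilon)$ uniformly in $\zeta_0$, so one must send $v\to\infty$ for fixed $\varepsilon$ first (killing the first term) and only then let $\varepsilon\to0$; letting $\varepsilon\to0$ first leaves the useless bound $\lesssim C$.
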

 
 Using this final ingredient, whose proof we defer to the end of the section, we are now able to prove Theorem \ref{teofast}. We will actually show something a little bit stronger, namely that for every initial condition $\zeta_0$ we have that $\eL$ dies out, i.e. that
 \begin{equation}
 \P_{\zeta_0}\big(\forall t\geq0,\,\eL_t\neq\emptyset\big)\;=\;0.\label{Ptobd}
 \end{equation}
 Define the event
 \[A=\{\exists t_0\geq0,\,\ec_t=\emptyset\;\forall\,t\geq t_0\},\]
 which by Lemma \ref{keyrestricted} occurs with probability 1; in particular, the left hand side of \eqref{Ptobd} equals \smash{$\P_{\zeta_0}\big(\{(0,0)\lL\infty\}\cap A\big)$}, where for $y\in\Z$ and $t\geq0$ the event $(y,t)\lL\infty$ stands for the existence of a weakly valid path $P$ starting at $(y,t)$ which is unbounded in its time component.
 Take a realization of the extended graphical construction and suppose $P$ is such a path (so that $(0,0)\lL\infty)$. Then, on $A$, $P$ must traverse a weakly valid infection event $(\{x,y\},t)$ which is not $p$-weakly valid, and such that $\ec_t(x)=0$ and $\ec_t(y)=1$ (since, otherwise, $P$ would also count as a weakly valid path). 
 Hence the left hand side of \eqref{Ptobd} is equal to
 \[\P_{\zeta_0}\big(\{\exists k\in\N,\;\tau_k<\infty\text{ and }(x_k,\tau_k)\lL\infty\}\cap A\big)\leq\sum_{k=1}^\infty \P_{\zeta_0}\big(\tau_k<\infty\text{ and }(x_k,\tau_k)\lL\infty\big).\]
 The $\tau_k$ are stopping times so by the strong Markov property we get
 \[\P_{\zeta_0}\big(\tau_k<\infty\text{ and }(x_k,\tau_k)\lL\infty\big)\;=\;\E_{\zeta_0}\big(\one_{\{\tau_k<\infty\}}\P\big((x_k,0)\lL\infty\,|\,\zeta_{\tau_k},F_{\tau_k}\big)\big).\]
 But $\eL$ is decreasing with respect to $F_0$, so by taking $F_0\equiv 0$ and then taking the supremum with respect to $\zeta_0$, we can use the translation invariance (in law) of $\eL$ to deduce
 \begin{align*}
 \P_{\zeta_0}\big(\tau_k<\infty\text{ and }(x_k,\tau_k)\lL\infty\big)&\leq\P_{\zeta_0}\big(\tau_k<\infty)\sup_{\zeta'_0}\P_{\zeta'_0}\big((0,0)\lL\infty\big)\\
 &=\P_{\zeta_0}(\tau_k<\infty)\sup_{\zeta'_0}\P_{\zeta'_0}\big(\eL_t\neq\emptyset,\;\forall t\geq0\big).
 \end{align*}
 Now, $\tau_k<\infty$ if and only if $N_p\geq k$, so from the above arguments we get
 \begin{align*}
 \P_{\zeta_0}\big(\eL_t\neq\emptyset,\;\forall t\geq0\big)&\leq\sum_{k=1}^\infty\P_{\zeta_0}(N_p\geq k)\sup_{\zeta'_0}\P_{\zeta'_0}\big(\eL_t\neq\emptyset,\;\forall t\geq0\big)\\
 &=\E_{\zeta_0}(N_p)\sup_{\zeta'_0}\P_{\zeta'_0}\big(\eL_t\neq\emptyset,\;\forall t\geq0\big),
 \end{align*}
 and taking the supremum over $\zeta_0$ we conclude that
 \[\sup_{\zeta_0}\P_{\zeta_0}(\eL_t\neq\emptyset,\;\forall t\geq0)\;\leq\;\sup_{\zeta_0}\E_{\zeta_0}(N_p)\sup_{\zeta'_0}\P_{\zeta'_0}\left(\eL_t\neq\emptyset,\;\forall t\geq0\right).\]
 But from Lemma \ref{keyrestricted} we know that if $v$ is large then $\sup_{\zeta_0}\E_{\zeta_0}(N_p)<1$, so the last inequality gives $\sup_{\zeta_0}\P_{\zeta_0}(\eL_t\neq\emptyset,\;\forall t\geq0)=0$, proving the theorem.
 
 \medskip

\begin{proof}[Proof of Lemma \ref{keyrestricted}]
Observe that if the initial configuration for the environment were to be chosen at random, then from a previous discussion the law of $\ec$ would be that of a contact process with rate $\lambda p$ and hence it would die out. However, since we start with a fixed, given $\zeta_0$ and $F_0\equiv 0$, the first infection event at every edge could have a higher chance to be a $p$-weakly valid one, which means that in $\ec$ the time until the first infection event in each edge has a different distribution; our goal then is to show that this feature cannot account for survival. To this end fix any $v>1$ and choose $\varepsilon>0$ small so that $\lambda (p+\varepsilon)<\bar{\lambda} $ (recall we are assuming $\lambda p<\bar{\lambda}$). Next, take $s=\log({1-p\over\varepsilon})$ (which is of course positive if $\varepsilon<1-p$) and finally fix any initial condition $F_0$ and $\zeta_0$. To show that $\ec$ dies out we bound it from above by a process $\bar{\eta}^p$ defined as follows:
  \begin{itemize}
  	\item From times $0$ to $s$, $\bar{\eta}^p$ evolves as a SI process with rate $\lambda$, i.e. without recoveries and behaving as if all edges are open, starting with only one infected site at $0$. During this time interval the process is constructed using only $\mI$.
  	\item From time $s$ onwards, $\bar{\eta}^p$ is constructed in the same way as $\ec$.
  \end{itemize}
 Observe that $|\bar{\eta}^p_s|$ has finite expectation (since $\G$ has bounded degree) and it is independent of $\zeta$. Also, for any $s'\geq s$ the law of $\zeta_{s'}$ is a product measure with
 \[\P(\zeta_{s'}(e)=1)\;=\;\zeta_0(e)\exp(-vs')+p(1-\exp(-vs'))\;\leq\;p+\varepsilon,\]
 from our choice of $s$ and our assumption $v>1$. Now, it follows that each infection event after time $s$ has probability at most $p+\varepsilon$ of being a $p$-weakly valid one, independently of all other infection events, so that $(\bar{\eta}^p_{s+t})_{t\geq 0}$ is bounded from above by a contact process running on $\G$ with rate $\lambda(p+\varepsilon)$ and with initial condition $\bar{\eta}^p_s$. Since $\G$ is vertex-transitive it follows from Theorem 1.2. in \cite{aizen} that 
 for any subcritical contact process $A_t$ we have
 \[\int_{0}^\infty \E\big(|A_t|\,\big|\,A_0=\{0\}\big)dt\,<\,\infty,\]
 so from a union bound, translation invariance and the independence between $\bar{\eta}^p_s$ and $\zeta$, we conclude similarly that
 \begin{align*}
 	\int_0^\infty\E\big(|\bar{\eta}^p_t|\big)dt\;&\leq\;s\E(|\bar{\eta}^p_s|)+\E\bigg(\int_0^\infty\E\big(|\bar{\eta}^p_{s+t}|\,\big|\,\bar{\eta}^p_s\big)\,dt\bigg)\\
 	&\;\leq s\E(|\bar{\eta}^p_s|)+\E\bigg(\int_0^\infty|\bar{\eta}^p_s|\,\E\big(|\bar{\eta}^p_{s+t}|\,\big|\,\bar{\eta}^p_s=\{0\}\big)\,dt\bigg)\\
 	&\;=\;\E(|\bar{\eta}^p_s|)\bigg(s+\int_0^\infty\E\big(|\bar{\eta}^p_{s+t}|\,\big|\,\bar{\eta}^p_s=\{0\}\big)\,dt\bigg)\,<\,\infty,
 \end{align*}
 which in particular gives that $\eta^p$ dies out almost surely.

 We turn now to the second part of the lemma. We begin by noticing that the expectation is maximized when $\zeta_0\equiv 1$ so it is enough to prove the result under this initial condition, which we now fix. Define an increasing sequence of events
 \[A_n\;=\;\big\{\forall (x,t)\notin B(0,n)\times[0,n],\;\ec_t(x)=0\big\},\]
 and then use the fact that $\ec$ dies out almost surely for all initial configurations to write
 \begin{equation*}
 \E(N_p)\;=\;\sum_{n=1}^\infty\E(N_p\uno{A_{n}\setminus A_{n-1}}).\label{eq:eqsum}
 \end{equation*}
 As before, observe that each $\tau_k$ corresponds to an infection event taking place at an unrefreshed edge, and that, on the event $A_n$, $\ec_t(x)=1$ implies that $(x,t)\in B(0,n)\times[0,n]$ so that $N_p$ is bounded by the amount $M_n$ of said infections taking place inside $B(0,n)\times[0,n]$. Thus
 \begin{equation*}\E(N_p\uno{A_{n}\setminus A_{n-1}})\;\leq\;\E(M_n).\label{desmn}\end{equation*}
 We claim that each such term goes to zero as $v\to\infty$. Indeed, fix some $n\in\N$ and take $v>16\lambda^2n^2$. Next, divide $M_n$ into $M_n=\cup_{e\in E\cap B(0,n)} M_{n,e}$, where each $M_{n,e}$ corresponds to the number of infections occuring at edge $e$ at times when $e$ was not fresh and before time $n$. By translation invariance we obtain that all the $M_{n,e}$ have the same law, so $\E(M_n)\leq|B(0,n)|\E(M_{n,e})$ for any fixed edge $e$. Call $t_0=0$ and $t_1,t_2,\dotsc$ the elements of $\mI^e$. The variable $M_{n,e}$ is equal to the cardinality of the set $\big\{k\in\N,\;t_k\leq n\;\wedge\;\mU^e\cap(t_{k-1},t_k)=\emptyset\big\}$, so its expectation is equal to
 \[\sum_{k=1}^\infty\P\big(t_k\leq n\;\wedge\;\mU^e\cap(t_{k-1},t_k)=\emptyset\big).\]
 If $k\leq\sqrt{v}$ we bound the above probability by $\P\big(\mU^e\cap(t_{k-1},t_k)=\emptyset\big)$, which is the probability that the next event in $\mI^e\cup\mU^e$ following $t_{k-1}$ belongs to $\mI^e$, and hence is equal to $\lambda/(v+\lambda)$.
 Otherwise, if $k>\sqrt{v}$ we bound by $\P\big(t_k\leq n\big)$, which by a large deviation argument and our assumption on $v$, is less than $(e/4)^k$. Using these bounds, we conclude that, as desired,
 \begin{equation*}
 \E(M_n)\;\leq\;|B(0,n)|\bigg[\frac{\lambda\sqrt{v}}{\lambda+v}+4\left(\frac{e}{4}\right)^{\sqrt{v}}\bigg]\xrightarrow[v\to\infty]{}0.\label{eq:eMnto0}
 \end{equation*}
 Since each $\E(N_p\uno{A_{n}\setminus A_{n-1}})$ converges to zero, in order to finish the proof it is enough (by the Dominated Convergence Theorem) to show that these expectations can be bounded by the terms of a convergent series. To achieve this we construct a random variable $\bar{N}_p$, similar to $N_p$, as follows: $\bar{N}_p$ equals the cardinality of the set of all infection events $t\in\R^+$ that are not $p$-weakly valid, and taking place at edges $\{x,y\}\in E$ for which either $\eta_t(x)=1$ or $\eta_t(y)=1$. We have $N_p\leq \bar{N}_p$ almost surely, so
 \[\E(N_p\uno{A_{n}\setminus A_{n-1}})\;\leq\;\E(\bar{N}_p\uno{A_{n}\setminus A_{n-1}})\]
 for each $n$ and hence all we need to show is that $\sum_{n\in\N}\E(\bar{N}_p\uno{A_{n}\setminus A_{n-1}})=\E(\bar{N}_p)<\infty$. Observe that for each edge $e\in E$ the first $t\in\mI^e\cup\mU^e$ is a $p$-weakly valid infection event if and only if $t\in\mI^e$, while all subsequent times are $p$-weakly valid with probability $p$, independently from one another. Hence the $p$-weakly and non $p$-weakly valid infections can be seen as resulting from the following construction:
 \begin{itemize}
 	\item For each $e\in E$ consider an exponential time $t_{e}$ with rate $\lambda+v$, a Bernoulli random variable $c_e$ with probability $\lambda/(v+\lambda)$, and two Poisson point processes $\mI^e_\text{wv}$ and $\mI^e_\text{wi}$ with rates $\lambda p$ and $\lambda(1-p)$, respectively. All of these variables and processes are independent from one another.
 	\item The $p$-weakly valid infections are the elements $t\in\mI^e_\text{wv}$ with $t>t_e$; $t_e$ itself is $p$-weakly valid if $c_e=1$.
 	\item The non $p$-weakly valid infections are the elements $t\in\mI^e_\text{wi}$ with $t>t_e$.
 \end{itemize}
 With this alternative construction, it follows that $\ec$ is constructed using the $\mathcal{R}^x$, $\mI^e_\text{wv}$, $t_e$ and $c_e$ but not the $\mI^e_\text{wi}$. Fix a realization of the former processes giving a construction of $\ec$ and observe that
 \begin{align*}
 \E(\bar{N}_p\,|\,\ec)\,&\leq\,\sum_{x\in \G}\sum_{y\sim x}\E\big(|\{t\in\mI^{\{x,y\}}_\text{wi},\;\eta_t(x)=1\}|\,\big|\,\ec\big)\leq\,\sum_{x\in \G}\sum_{y\sim x}\lambda(1-p)\int_0^\infty\ec_t(x)\,dt\\
 &=\,\lambda(1-p)\,\text{deg}(\G)\int_0^\infty|\ec_t|\,dt,
 \end{align*}
 where in the second inequality we used that the $\mI^e_\text{wi}$ are Poisson processes, so that each variable $|\{t\in\mI^{\{x,y\}}_\text{wi},\;\eta_t(x)=1\}|$ is Poisson with mean $\lambda(1-p)\int_0^\infty\ec_t(x)\,dt$.
 We deduce that $\E(\bar{N}_p)\leq\lambda(1-p)\,\text{deg}(\G)\int_0^\infty\E(|\ec_t|)\,dt$, which is finite from our previous analysis on $\bar{\eta}^p$.
 \end{proof}

\subsection{Proof of Theorem \ref{teo1}}

In this part we work on $\Z$.
In this case, and as we mentioned above, our evolving networks can be thought of as lying halfway between a finite and an infinite graph: even though at all times $\Z$ is partitioned into finite components, every two sites are eventually connected by space-time paths.
The proof of Theorem \ref{teo1} relies on showing that in this regime the finite aspect of the evolving network dominates.
The idea is simple: for small enough values of $v$, in the time scale of the infection, typical connected components look almost static, so the process becomes extinct within them; exceptionally large components, on the other hand, are unstable, quickly dividing into smaller ones, so they cannot account for survival.

To define what ``typical connected components'' are in a useful way, consider the set $r_0\Z$ where $r_0\in\N$ is some large integer to define later as a function of $p$. The main idea is to partition $\Z$ into intervals around a family of elements chosen from this set in such a way that we can control the infection inside them.
However, since our network evolves in time, we will need to allow these blocks to evolve as well.
To this end we partition $\R^+$ into time intervals of the form $[nT,(n+1)T)$, with $T>0$ a large parameter to be fixed later as a function of $\lambda$ and $r_0$; our space-time blocks will always have the form $B_{k,n}\times[nT,(n+1)T)$ with $\big(B_{k,n}\big)_{k\in\Z}$ spatial blocks partitioning $\Z$, which depend on the time parameter $n$ and are constructed as follows.

Say that an edge $e\in E$ is {\it $n$-closed} if $\zeta_t(e)=0$ for all $t\in[nT,(n+1)T)$; this means that $e$ acts as a barrier for the infection throughout the whole time interval. Using these barriers we introduce random variables $V_{\{k,k+1\},n}$ as
\[V_{\{k,k+1\},n}={\bf 1}_{\text{no edge $e$ in $[kr_0,(k+1)r_0]$ is $n$-closed}},\]
so that $V_{\{k,k+1\},n}=0$ indicates that the infection cannot spread between $kr_0$ and $(k+1)r_0$ during the time interval $[nT,(n+1)T)$. Next let $e_{k,n}$ be the leftmost $n$-closed edge between $kr_0$ and $(k+1)r_0$, if there is some, and $e_{k,n}=\{(k+1)r_0-1,(k+1)r_0\}$ if there is none, that is, $e_{k,n}$ is the last edge traversed when moving to the right of $k r_0$ until either hitting a barrier or $(k+1)r_0$. Using these variables we finally let $B_{k,n}=[e^+_{k-1,n},e^-_{k,n}]$, where $e^-$ and $e^+$ represent the left and right vertices of $e$, respectively; see Figure \ref{fig2} for a picture.

\begin{figure}[htbp]
	\begin{center}
		\begin{tikzpicture}[line cap=round,line join=round,>=triangle 45,x=1.4cm,y=1.4cm]
		\fill [lightgray] (-2.6,0.0) rectangle (-2.4,0.15);
		\fill [lightgray] (-2.6,0.715) rectangle (-2.4,0.725);
		\fill [lightgray] (-2.6,1.055) rectangle (-2.4,1.56);
		\fill [lightgray] (-2.6,1.72) rectangle (-2.4,1.86);
		\fill [lightgray] (-2.6,2.02) rectangle (-2.4,2.35);
		\fill [lightgray] (-2.4,0.0) rectangle (-2.2,0.4);
		\fill [lightgray] (-2.4,0.6) rectangle (-2.2,0.75);
		\fill [lightgray] (-2.4,1.64) rectangle (-2.2,1.75);
		\fill [lightgray] (-2.4,1.84) rectangle (-2.2,1.89);
		\fill [lightgray] (-2.4,2.32) rectangle (-2.2,2.5);
		\fill [lightgray] (-2.2,0.3) rectangle (-2.0,0.55);
		\fill [lightgray] (-2.2,0.95) rectangle (-2.0,1.0);
		\fill [lightgray] (-2.2,1.1) rectangle (-2.0,2.5);
		\fill [lightgray] (-2.0,0.2) rectangle (-1.8,0.225);
		\fill [lightgray] (-2.0,0.2365) rectangle (-1.8,0.57);
		\fill [lightgray] (-2.0,0.835) rectangle (-1.8,0.95);
		\fill [lightgray] (-2.0,1.535) rectangle (-1.8,2.15);
		\fill [lightgray] (-1.8,0.0) rectangle (-1.6,0.875);
		\fill [lightgray] (-1.8,0.965) rectangle (-1.6,1.47);
		\fill [lightgray] (-1.8,2.254) rectangle (-1.6,2.37);
		\fill [lightgray] (-1.8,2.44) rectangle (-1.6,2.5);
		\fill [lightgray] (-1.6,0.415) rectangle (-1.4,0.98);
		\fill [lightgray] (-1.6,1.3) rectangle (-1.4,1.42);
		\fill [lightgray] (-1.6,1.47) rectangle (-1.4,1.56);
		\fill [lightgray] (-1.6,1.63) rectangle (-1.4,2.5);
		\fill [lightgray] (-1.4,0.115) rectangle (-1.2,0.23);
		\fill [lightgray] (-1.4,0.44) rectangle (-1.2,0.58);
		\fill [lightgray] (-1.4,0.765) rectangle (-1.2,0.93);
		\fill [lightgray] (-1.4,1.065) rectangle (-1.2,1.15);
		\fill [lightgray] (-1.2,0.0) rectangle (-1.0,0.1);
		\fill [lightgray] (-1.2,0.65) rectangle (-1.0,0.21);
		\fill [lightgray] (-1.2,1.58) rectangle (-1.0,1.61);
		\fill [lightgray] (-1.2,1.63) rectangle (-1.0,1.67);
		\fill [lightgray] (-1.2,1.81) rectangle (-1.0,2.01);
		\fill [lightgray] (-1.0,0.075) rectangle (-0.8,0.45);
		\fill [lightgray] (-1.0,0.925) rectangle (-0.8,1.055);
		\fill [lightgray] (-1.0,1.2) rectangle (-0.8,1.76);
		\fill [lightgray] (-1.0,2.23) rectangle (-0.8,2.5);
		\fill [lightgray] (-0.8,0.635) rectangle (-0.6,0.735);
		\fill [lightgray] (-0.6,0.0) rectangle (-0.4,0.365);
		\fill [lightgray] (-0.6,0.685) rectangle (-0.4,0.815);
		\fill [lightgray] (-0.6,0.935) rectangle (-0.4,2.5);
		\fill [lightgray] (-0.4,0.0) rectangle (-0.2,0.265);
		\fill [lightgray] (-0.4,0.45) rectangle (-0.2,1.05);
		\fill [lightgray] (-0.4,1.43) rectangle (-0.2,2.35);
		\fill [lightgray] (-0.2,0.325) rectangle (-0.0,0.835);
		\fill [lightgray] (-0.2,0.141) rectangle (-0.0,0.145);
		\fill [lightgray] (-0.2,1.46) rectangle (-0.0,1.48);
		\fill [lightgray] (-0.2,1.52) rectangle (-0.0,1.63);
		\fill [lightgray] (-0.2,1.83) rectangle (-0.0,2.07);
		\fill [lightgray] (0.0,0.1) rectangle (0.2,0.2);
		\fill [lightgray] (0.0,0.7) rectangle (0.2,0.79);
		\fill [lightgray] (0.0,0.95) rectangle (0.2,1.075);
		\fill [lightgray] (0.0,1.31) rectangle (0.2,1.71);
		\fill [lightgray] (0.2,0.515) rectangle (0.4,0.89);
		\fill [lightgray] (0.2,1.015) rectangle (0.4,2.5);
		\fill [lightgray] (0.4,0.0) rectangle (0.6,0.615);
		\fill [lightgray] (0.4,1.31) rectangle (0.6,1.415);
		\fill [lightgray] (0.4,1.52) rectangle (0.6,1.61);
		\fill [lightgray] (0.4,1.72) rectangle (0.6,1.735);
		\fill [lightgray] (0.6,0.0) rectangle (0.8,0.115);
		\fill [lightgray] (0.6,0.33) rectangle (0.8,0.545);
		\fill [lightgray] (0.6,0.8) rectangle (0.8,0.93);
		\fill [lightgray] (0.6,1.79) rectangle (0.8,2.31);
		\fill [lightgray] (0.8,1.19) rectangle (1.0,1.49);
		\fill [lightgray] (0.8,2.42) rectangle (1.0,2.5);
		\fill [lightgray] (1.0,0.27) rectangle (1.2,0.715);
		\fill [lightgray] (1.0,1.07) rectangle (1.2,1.38);
		\fill [lightgray] (1.2,0.0) rectangle (1.4,1.29);
		\fill [lightgray] (1.2,1.42) rectangle (1.4,1.45);
		\fill [lightgray] (1.2,1.48) rectangle (1.4,1.9);
		\fill [lightgray] (1.4,0.26) rectangle (1.6,0.4);
		\fill [lightgray] (1.4,0.56) rectangle (1.6,0.71);
		\fill [lightgray] (1.4,0.915) rectangle (1.6,0.985);
		\fill [lightgray] (1.4,1.315) rectangle (1.6,1.485);
		\fill [lightgray] (1.4,1.615) rectangle (1.6,2.5);
		\fill [lightgray] (1.6,0.0) rectangle (1.8,0.175);
		\fill [lightgray] (1.6,0.93) rectangle (1.8,1.225);
		\fill [lightgray] (1.6,1.73) rectangle (1.8,2.325);
		\fill [lightgray] (1.8,0.83) rectangle (2.0,0.925);
		\fill [lightgray] (1.8,1.045) rectangle (2.0,1.11);
		\fill [lightgray] (1.8,1.45) rectangle (2.0,1.56);
		\fill [lightgray] (2.0,0.0) rectangle (2.2,2.03);
		\fill [lightgray] (2.2,0.115) rectangle (2.4,0.23);
		\fill [lightgray] (2.2,0.4) rectangle (2.4,0.53);
		\fill [lightgray] (2.2,0.79) rectangle (2.4,0.945);
		\fill [lightgray] (2.2,1.05) rectangle (2.4,1.125);
		\fill [lightgray] (2.2,2.11) rectangle (2.4,2.25);
		\fill [lightgray] (2.4,0.0) rectangle (2.6,0.31);
		\fill [lightgray] (2.4,1.015) rectangle (2.6,1.165);
		\fill [lightgray] (2.4,1.215) rectangle (2.6,1.865);
		\fill [lightgray] (2.4,2.23) rectangle (2.6,2.35);
		\draw [line width=1.5pt,<->] (-3.0,0.0)-- (3.0,0.0);
		\draw [line width=1.5pt,->] (0.0,0.0)node[xshift=-0.0cm,yshift=-0.3cm]{$0$}-- (0.0,3.0);
		\draw [line width=0.3pt] (-2.6,-0.1)node[xshift=-0.1cm,yshift=-0.2cm]{$-r_0$}-- (-2.6,2.5);
		\foreach \a in {-2.4, -2.2, ..., 2.4 }
		\draw [line width=0.3pt] (\a,0.0)-- (\a,2.5);
		\draw [line width=0.3pt] (2.6,-0.1)node[xshift=-0.0cm,yshift=-0.2cm]{$r_0$}-- (2.6,2.5);
		\draw [line width=1.5pt,dotted] (-2.6,2.5)-- (2.6,2.5);
		\node at (-2.9,2.5){$2T$};
		\draw [line width=1.5pt,dotted] (-2.6,1.25)-- (2.6,1.25);
		\node at (-2.8,1.25){$T$};
		\draw[pattern=custom north west lines,hatchspread=3.5pt,hatchthickness=0.3pt,hatchcolor=red] (-2.2,1.25) rectangle (-2.0,2.5);
		\draw[pattern=custom north west lines,hatchspread=3.5pt,hatchthickness=0.3pt,hatchcolor=red] (-0.6,1.25) rectangle (-0.4,2.5);
		\draw[pattern=custom north west lines,hatchspread=3.5pt,hatchthickness=0.3pt,hatchcolor=red] (0.2,1.25) rectangle (0.4,2.5);
		\draw[pattern=custom north west lines,hatchspread=3.5pt,hatchthickness=0.3pt,hatchcolor=red] (1.2,0) rectangle (1.4,1.25);
		\draw[pattern=custom north west lines,hatchspread=3.5pt,hatchthickness=0.3pt,hatchcolor=red] (2.0,0) rectangle (2.2,1.25);
		\draw [red,line width=2.4pt] (0.0,0.0) rectangle (1.2,1.235);
		\draw [blue!85!black,line width=2.4pt] (-2.0,1.275) rectangle (0.2,2.5);
		\end{tikzpicture}
	\end{center}
	\vspace{-0.3cm}
	\caption{\footnotesize Gray rectangles represent intervals where the edge is absent, and hatched rectangles represent the barriers given by $n$-closed edges. In this case there are no $0$-closed edges between $-r_0$ and $0$, so the block $B_{0,0}\times[0,T)$ (in red) goes from $0$ to the first barrier it encounters to the right. The block $B_{0,1}\times[T,2T)$ (in blue), on the other hand, goes from the leftmost barrier between $-r_0$ and $0$, to the leftmost one between $0$ and $r_0$.}
	\label{fig2}
\end{figure}
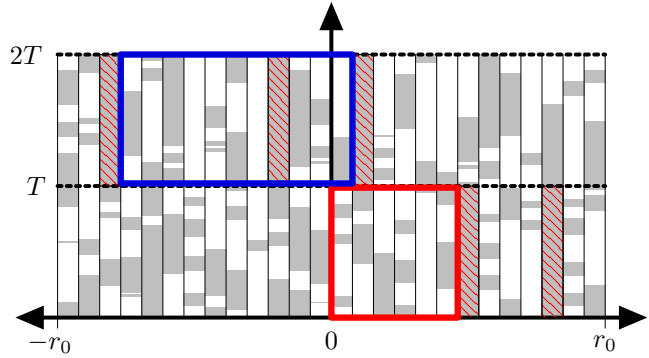

The construction of the $B_{k,n}$, which is random (depending on $\zeta$), satisfies the following properties, which are easy to check:
\begin{itemize}
	\item The blocks $\big(B_{k,n}\times[nT,(n+1)T)\big)_{k,n\in\Z}$ partition $\Z\times\R^+$.
	\item The vertex $kr_0$ always belongs to $B_{k,n}$. In particular, each $B_{k,n}$ has length between $1$ and $2r_0-1$.
	\item If $V_{\{k-1,k\},n}=V_{\{k,k+1\},n}=0$, then there are barriers separating $B_{k,n}$ from $B_{k-1,n}$ and $B_{k+1,n}$. This implies that any infection in $B_{k,n}$ gets locally {\it quarantined} during the time interval $[nT,(n+1)T)$.
\end{itemize}
The key property is the last one, since it implies that the infection should die out in $B_{k,n}$ with high probability if $T$ is large enough (compared to $r_0$). Define now
\begin{equation*}
U_{k,n}={\bf 1}_{\text{there is a valid path contained in $B_{k,n}\times[nT,(n+1)T)$ which starts at time $nT$ and ends at time $(n+1)T$}}.\label{defiU}
\end{equation*}
While the $V$ variables account for the connectivity between intervals, the $U$ variables account for the behaviour of the infection inside them; together they will give us enough information to control $\eta$. To this end we identify the blocks $B_{k,n}\times[nT,(n+1)T)$ with their respective indices $(k,n)\in\Z\times\N$ and use them as vertices of a graph $H$ whose edge set is obtained from the $U$ and $V$ variables according to the following rules:
\begin{enumerate}
	\item If $U_{k,n}=1$, then add the edges between $(k,n)$ and each of $(k-1,n+1),(k,n+1),$ and $(k+1,n+1)$.
	\item If $V_{\{k,k+1\},n}=1$, then add edges as in the previous point as if $U_{k,n}=1$ and $U_{k+1,n}=1$, and also the edge between $(k,n)$ and $(k+1,n)$.
\end{enumerate}

We say that a path between $(k_0,n_0),(k_1,n_1),\dotsc,(k_f,n_f)$ in $H$ is $H$-valid if the sequence $n_0,\dotsc,n_f$ is non-decreasing and there is no $i<f$ such that $n_i=n_f$.\\

\begin{figure}[h]
	\begin{center}
		\begin{tikzpicture}[line cap=round,line join=round,>=triangle 45,x=2.0cm,y=2.0cm]
		\foreach \i in {-2.5,-1.5,-0.5,0.5,1.5,2.5}
		{
		\draw[line width=0.9pt] (\i,0) circle (0.3cm);
		\draw[line width=0.9pt] (\i,0.7) circle (0.3cm);
		}
		\node[scale=1.3] at (-3.4,0) {$n=0$};
		\node[scale=1.3] at (-3.4,0.7) {$n=1$};
		\node[scale=1.3, red] at (-2.5,0) {$0$};
		\node[scale=1.3, red] at (-1.5,0) {$1$};
		\node[scale=1.3, red] at (-0.5,0) {$1$};
		\node[scale=1.3, red] at (0.5,0) {$0$};
		\node[scale=1.3, red] at (1.5,0) {$0$};
		\node[scale=1.3, red] at (2.5,0) {$0$};
		\node[scale=1.3, blue] at (-2,0) {$0$};
		\node[scale=1.3, blue] at (-1,0) {$0$};
		\node[scale=1.3, blue] at (0,0) {$1$};
		\node[scale=1.3, blue] at (1,0) {$0$};
		\node[scale=1.3, blue] at (2,0) {$0$};
		\draw [line width=1.1pt,dotted] (-2.8,0.0)-- (-2.66,0.0);
		\draw [line width=1.1pt,dotted] (-2.34,0.0)-- (-2.09,0.0);
		\draw [line width=1.1pt,dotted] (-1.911,0.0)-- (-1.66,0.0);
		\draw [line width=1.1pt,dotted] (-1.34,0.0)-- (-1.09,0.0);
		\draw [line width=1.1pt,dotted] (-0.911,0.0)-- (-0.66,0.0);
		\draw [line width=1.5pt] (-0.34,0.0)-- (-0.08,0.0);
		\draw [line width=1.5pt] (0.08,0.0)-- (0.34,0.0);
		\draw [line width=1.1pt,dotted] (0.66,0.0)-- (0.92,0.0);
		\draw [line width=1.1pt,dotted] (1.08,0.0)-- (1.34,0.0);
		\draw [line width=1.1pt,dotted] (1.66,0.0)-- (1.92,0.0);
		\draw [line width=1.1pt,dotted] (2.089,0.0)-- (2.34,0.0);
		\draw [line width=1.1pt,dotted] (2.66,0.0)-- (2.8,0.0);
		\draw [line width=1.5pt] (-1.635,0.0945) -- (-2.355,0.5985);
		\draw [line width=1.5pt] (-1.365,0.0945) -- (-0.645,0.5985);
		\draw [line width=1.5pt] (-1.5,0.155) -- (-1.5,0.545);
		\draw [line width=1.5pt] (-0.635,0.0945) -- (-1.355,0.5985);
		\draw [line width=1.5pt] (-0.365,0.0945) -- (0.355,0.5985);
		\draw [line width=1.5pt] (-0.5,0.155) -- (-0.5,0.545);
		\draw [line width=1.5pt] (0.365,0.0945) -- (-0.355,0.5985);
		\draw [line width=1.5pt] (0.635,0.0945) -- (1.355,0.5985);
		\draw [line width=1.5pt] (0.5,0.155) -- (0.5,0.545);
		\draw [line width=1.5pt] (0.365,0.0945) -- (-0.355,0.5985);
		\draw [line width=1.5pt] (0.635,0.0945) -- (1.355,0.5985);
		\draw [line width=1.5pt] (0.5,0.155) -- (0.5,0.545);
		\end{tikzpicture}
		\label{consth1}
		\caption{\footnotesize Construction of $H$ using the $U$ and $V$ variables (in red and blue, respectively). Observe that the second vertex from the left at the bottom is isolated, yet its $U$ variable is equal to $1$, meaning that the infection is able to survive there, which is the reason it is connected to the vertices on the row above.}
\end{center}
\end{figure}
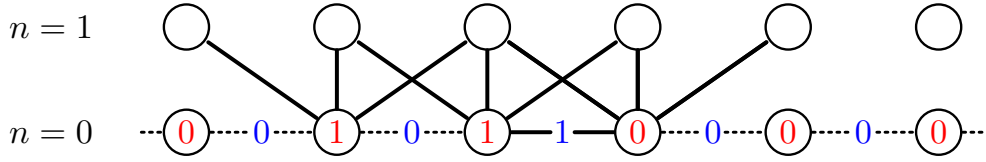

\begin{figure}[h]
	\begin{center}
		
		\begin{tikzpicture}[line cap=round,line join=round,>=triangle 45,x=2.0cm,y=2.0cm]
		\node[scale=1.3] at (-3.4,0) {$n=0$};
		\node[scale=1.3] at (-3.4,0.7) {$n=1$};
		\foreach \i in {-2.5,-1.5,-0.5,0.5,1.5,2.5}
		{
			\draw[line width=0.9pt] (\i,0) circle (0.3cm);
			\draw[line width=0.9pt] (\i,0.7) circle (0.3cm);
		}
		\draw [line width=1.5pt] (-0.34,0.0)-- (0.34,0.0);
		\draw [line width=1.5pt] (-1.635,0.0945) -- (-2.355,0.5985);
		\draw [line width=1.5pt] (-1.365,0.0945) -- (-0.645,0.5985);
		\draw [line width=1.5pt] (-1.5,0.155) -- (-1.5,0.545);
		\draw [line width=1.5pt] (-0.635,0.0945) -- (-1.355,0.5985);
		\draw [line width=1.5pt] (-0.365,0.0945) -- (0.355,0.5985);
		\draw [line width=1.5pt] (-0.5,0.155) -- (-0.5,0.545);
		\draw [line width=1.5pt] (0.365,0.0945) -- (-0.355,0.5985);
		\draw [line width=1.5pt] (0.635,0.0945) -- (1.355,0.5985);
		\draw [line width=1.5pt] (0.5,0.155) -- (0.5,0.545);
		\draw [line width=1.5pt] (0.365,0.0945) -- (-0.355,0.5985);
		\draw [line width=1.5pt] (0.635,0.0945) -- (1.355,0.5985);
		\draw [line width=1.5pt] (0.5,0.155) -- (0.5,0.545);
		\draw[line width=2pt, red] (0.5,0) circle (0.3cm);
		\draw[line width=2pt, red] (-0.5,0) circle (0.3cm);
		\draw[line width=2pt, red] (-1.5,0.7) circle (0.3cm);
		\draw [line width=2pt,red] (-0.635,0.0945) -- (-1.355,0.5985);
		\draw [line width=2pt,red] (-0.34,0.0)-- (0.34,0.0);
		\node[scale=1.3] at (0.5,0) {$a$};
		\node[scale=1.3] at (-0.5,0.0) {$b$};
		\node[scale=1.3] at (-1.5,0.7) {$c$};
		
		\end{tikzpicture}
		\label{consth2}
		\caption{\footnotesize An $H$-valid path from node $a$ to node $c$. The path represents that an infection starting at the block $a$ is able to propagate to block $b$ between times $0$ and $T$, and some of the infected vertices can belong to block $c$ at time $T$.}
	\end{center}
\end{figure}
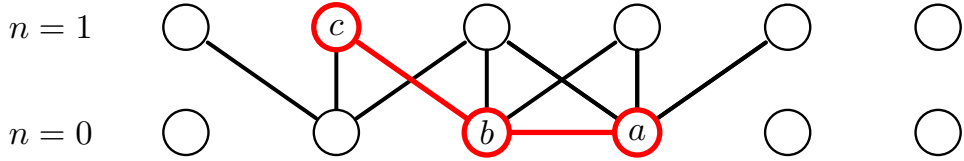

Finally, define a discrete-time process $\{Z_n\}_{n\in\N}$ (which we call $Z(U,V)$ when emphasizing the dependency on the $U$ and $V$ variables) taking values in the family of finite subsets of $\Z$, with $Z_0$ being the set of all $k\in\Z$ such that $B_{k,0}$ contains an initially infected site, and for $n\geq1$, $Z_{n}$ is the set of all the $k\in\Z$ such that there is some $H$-valid path starting at $Z_0\times\{0\}$ and ending in $(k,n)$. The next result shows that $\{Z_n\}_{n\in\N}$ in fact provides a suitable upper bound for $\eta$:

\begin{proposition}
	\label{Zboundseta}
	Take $k\in\Z$ and $n\in\N$. Outside a null probability event, if there is some $x\in\Z$ with $x\in B_{k,n}$ such that $\eta_{nT}(x)=1$, then $k\in Z_n$. In particular, $Z_n=\emptyset$ implies that $\eta_{nT}\equiv 0$ almost surely.
\end{proposition}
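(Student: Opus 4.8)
The plan is to prove the first (displayed) assertion by induction on $n$; the ``in particular'' statement then follows at once, for if $Z_n=\emptyset$ the contrapositive of the first assertion makes $\eta_{nT}$ vanish on every $B_{k,n}$, and the $\{B_{k,n}\}_{k\in\Z}$ partition $\Z$. The base case $n=0$ is built into the definitions: $Z_0$ is by construction the set of $k$ for which $B_{k,0}$ contains a site $x$ with $\eta_0(x)=1$.

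For the inductive step, assume the assertion at level $n$ and that $\eta_{(n+1)T}(x)=1$ for some $x\in B_{k,n+1}$. By the graphical representation there is a valid path $P$ from an initially infected site to $(x,(n+1)T)$. Since the time coordinate along $P$ is non-decreasing and runs from $0$ to $(n+1)T>nT$, the path meets the height $t=nT$; discarding the null event that an infection, update or recovery mark sits exactly at height $nT$, it meets it at a single vertex $y$, and the initial portion of $P$ certifies $\eta_{nT}(y)=1$. Writing $y\in B_{i,n}$, the inductive hypothesis gives $i\in Z_n$.

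The core of the argument is to analyze the portion $P'$ of $P$ between heights $nT$ and $(n+1)T$ at the resolution of the level-$n$ blocks. Three facts, each immediate from the definitions, drive it: (i) $P'$ cannot traverse an $n$-closed edge, since such an edge bears no valid infection mark on $[nT,(n+1)T)$; (ii) to move between $B_{j,n}$ and $B_{j+1,n}$ the path $P'$ must cross $e_{j,n}$, and $e_{j,n}$ is $n$-closed exactly when $V_{\{j,j+1\},n}=0$, so the set of level-$n$ block indices visited by $P'$ is a contiguous interval $[i_-,i_+]\ni i$ with $V_{\{j,j+1\},n}=1$ for all $i_-\le j<i_+$; (iii) directly from the definition of $e_{j,n}$ one has $B_{j,n}\subseteq((j-1)r_0,(j+1)r_0)$ for every $j$, so writing $x\in B_{m,n}$ gives $m\in[i_-,i_+]$ and $|k-m|\le 1$. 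Next I would check the trichotomy ``$U_{m,n}=1$, or $V_{\{m-1,m\},n}=1$, or $V_{\{m,m+1\},n}=1$'': if the two $V$'s both vanish then (third bulleted property of the construction) both boundary edges of $B_{m,n}$ are barriers, so $P'$ is confined to $B_{m,n}$ throughout $[nT,(n+1)T)$; in particular $i=m$ and $P'$ witnesses a valid path in $B_{m,n}\times[nT,(n+1)T]$ running from height $nT$ to height $(n+1)T$, i.e.\ $U_{m,n}=1$. In each of the three cases, rules (1)--(2) defining $H$ supply the edge $(m,n)\sim(k,n+1)$ (this is where $|k-m|\le1$ is used), while the edges $(j,n)\sim(j+1,n)$ provided by $V_{\{j,j+1\},n}=1$ for $\min(i,m)\le j<\max(i,m)$ connect $(i,n)$ to $(m,n)$ inside level $n$. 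Concatenating the $H$-valid path to $(i,n)$ furnished by the inductive hypothesis with this horizontal level-$n$ path and the final edge to $(k,n+1)$ yields an $H$-valid path from $Z_0\times\{0\}$ to $(k,n+1)$ — it is $H$-valid because its only vertex at the top level $n+1$ is the last one — hence $k\in Z_{n+1}$. A countable union over $(k,n)$ of the null events used above keeps everything almost sure.

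The step I expect to be delicate is (i)--(ii): making rigorous that a valid path cannot cross an edge that is closed on the whole interval $[nT,(n+1)T)$ — with a clean treatment of the boundary heights $nT$ and $(n+1)T$ and of the horizontal segments of the graphical construction — and then aligning the level-$n$ block $B_{m,n}$ actually visited by $P'$ with the level-$(n+1)$ block $B_{k,n+1}$ containing $x$, so that the required edge is among those added by rules (1)--(2). None of this is conceptually hard, but it is precisely where the designs of the blocks $B_{k,n}$ and of the graph $H$ must be used exactly as intended.
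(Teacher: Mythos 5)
Your proof is correct and follows essentially the same approach as the paper's: decompose the valid path at the times $nT$, track which level-$n$ blocks each strip of the path visits, and use the $V$ variables for the horizontal edges in $H$ together with $U_{m,n}$ (or the $V$-implied version of it) for the final vertical edge. The only presentational differences are that you induct on $n$ where the paper unrolls the whole path at once into $n$ segments, and that you organize the one-step argument as a trichotomy on $U_{m,n}$, $V_{\{m-1,m\},n}$, $V_{\{m,m+1\},n}$ whereas the paper splits on whether the path stays inside its starting block and then on the location of its endpoint's block — both case analyses identify exactly the same edges of $H$.
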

\begin{proof}
	If $n=0$, then there is some $x\in B_{k,0}$ such that $\eta_0(x)=1$ and the result follows from the definition of $Z_0$. Now suppose that $n\geq 1$ and take some $x\in B_{k,n}$ such that $\eta_{nT}(x)=1$. Using the graphical construction from Section \ref{graphical} we deduce that there must be a valid path between some $(x_0,0)$ with $\eta_0(x_0)=1$, and $(x,nT)$. Even further, assuming that there are no infection events at times of the form $nT$ (as is almost surely the case), this path defines a unique sequence of space-time points $(x_0,0),(x_1,T),\dotsc,(x_{n-1},(n-1)T),(x_n,nT)$ of points, which in turn defines a sequence of indices $(k_0,0),(k_1,1),\dotsc,(k_n,n)$ representing the blocks containing these points. Noticing that $k_0\in Z_0$, it will be enough to show that there is an $H$-valid path connecting all the $(k_j,j)$'s. Observing that joining two $H$-valid paths results in a $H$-valid path, it will actually be enough to show that there is such a path connecting $(k_0,0)$ and $(k_1,1)$. To do so we consider two cases.
	
	Suppose first that the path joining $(x_0,0)$ and $(x_1,T)$ is entirely contained in $B_{k_0,0}\times[0,T)$. Observe that in this case $x_1$ necessarily belongs to $B_{k_0,0}$ and hence $(k_0-1)r_0<x_1<(k_0+1)r_0$, so in particular $k_0-1\leq k_1\leq k_0+1$. Now, since there is a valid path contained in $B_{k_0,0}\times[0,T)$ (joining points at the top and the bottom of the block), then we necessarily have $U_{k_0,0}=1$ and hence the edge between $(k_0,0)$ and $(k_1,1)$ belongs to $H$, defining an $H$-valid path.
	
	Next suppose that the path joining $(x_0,0)$ and $(x_1,T)$ is not entirely contained in $B_{k_0,0}\times[0,T)$. Let then $k'_0$ be the index of the interval $B_{k'_0,0}$ containing $x_1$, and observe that if $k'_0= k_0$, then we have $k_0-1\leq k_1\leq k_0+1$ as before, and since the path is not contained in $B_{k_0,0}\times[0,T)$ we deduce that either $V_{\{k_0-1,k_0\},0}=1$ or $V_{\{k_0,k_0+1\},0}=1$ so there are edges in $H$ as if $U_{k_0,0}=1$ and we conclude as before. Suppose now that $k'_0> k_0$ (the case $k'_0<k_0$ is analogous) and observe that for any $k_0\leq j< k'_0$ we necessarily have $V_{\{j,j+1\},0}=1$, giving that the edge between $(j,0)$ and $(j+1,0)$ belongs to $H$. As before, we also know that the edge between $(k'_0,0)$ and $(k_1,1)$ belongs to $H$ and hence the path $(k_0,0),(k_0+1,0),\dotsc,(k'_0,0),(k_1,1)$ is $H$-valid, giving the result.
\end{proof}

It follows from its definition that $Z$ is a Markov process obtained as a particular variant of oriented percolation.
We say that this process dies out (or gets extinct) if it ever reaches $\emptyset$ and call $N_\text{ext}$ its extinction time, which, from the proposition above, satisfies $\tau_\text{ext}\leq TN_\text{ext}$ almost surely (where $\tau_\text{ext}$ is the extinction time of the CPDE). It follows that if we can prove a version of Theorem \ref{teo1} for $Z$, that is, $N_\text{ext}<\infty$ a.s. and $\E(N_\text{ext}|Z_0)\leq \beta_0\log(|Z_0|)+\beta_1$, then $\tau_\text{ext}<\infty$ a.s. and
\[\E(\tau_\text{ext}|\eta_0)\,\leq\,\E(TN_\text{ext}|Z_0(\eta_0))\,\leq\,T(\beta_0\log(|Z_0|)+\beta_1)\,\leq\,\tilde{\beta_0}\log(|\eta_0|)+\tilde{\beta}_1,\]
where we have used that $|Z_0|\leq|\eta_0|$ since every $k\in Z_0$ represents an interval $B_{k,0}$ containing an initially infected site.

The construction of $Z$ is relatively simple, but the complexity of the CPDE still remains hidden within the $U$ and $V$ variables, and thus obtaining the result for $Z$ could be just as hard as obtaining it for the original process. What saves us is that, as can be checked directly, $Z(U,V)$ is increasing in the $U$ and $V$ variables, meaning that if for each $k$ and $n$ we have $U_{k,n}\leq U'_{k,n}$ and $V_{\{k,k+1\},n}\leq V'_{\{k,k+1\},n}$, then $Z_m(U,V)\subseteq Z_m(U',V')$ for all $m$. It will be enough then to show that we can find a family of i.i.d. variables $U'$ and $V'$ as above which dominate the $U$ and $V$ variables, and prove a version of Theorem \ref{teo1} for $Z(U',V')$. We begin by showing this latter result:

\begin{lemma}
	\label{zdies}
	Suppose that the $U'_{k,n}$ and $V'_{\{k,k+1\},n}$ are independent Bernoulli random variables with parameter $\epsilon>0$. If $\epsilon$ is small enough, then $Z(U',V')$ dies out a.s. for any finite initial configuration.
	Furthermore, there are $\beta_0,\beta_1>0$ such that
	\[\E(N_\text{ext}|Z_0)\,\leq\,\beta_0\log(|Z_0|)+\beta_1.\]
\end{lemma}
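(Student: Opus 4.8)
The plan is to treat $Z(U',V')$ as a dependent oriented percolation process on $\Z\times\N$ and show that for $\epsilon$ small its "wet" set dies out exponentially fast, with the number of steps until extinction controlled logarithmically in $|Z_0|$. First I would record the precise dynamics: from the rules defining $H$, a site $k\in Z_{n+1}$ only if one of $k-1,k,k+1$ is in $Z_n$ \emph{and} the corresponding $U'$ or $V'$ variable equals $1$, or if $k$ is reachable from $Z_n$ within row $n$ through a chain of open $V'$ edges followed by an open $U'$ or $V'$ edge to row $n+1$. Since horizontal travel within a row requires a run of consecutive open $V'$ variables, and all these variables are i.i.d.\ Bernoulli$(\epsilon)$, the key estimate is that the "reach" of a single wet site at level $n$ into level $n+1$ is stochastically dominated by a small random interval: the probability that a given wet site produces a wet descendant at horizontal displacement $\geq m$ decays like $(C\epsilon)^{\max(1,m)}$ for a universal constant $C$ (counting the run of open $V'$'s plus one more open variable to jump up). A union bound over the at most three "entry points" per site then shows $\E\big(|Z_{n+1}|\,\big|\,Z_n\big)\leq q\,|Z_n|$ for some $q=q(\epsilon)<1$ once $\epsilon$ is small enough, because the expected number of level-$(n+1)$ sites each level-$n$ site can wet is $\sum_{m\geq 0}(2m+1)(C\epsilon)^{\max(1,m)}\to 0$ as $\epsilon\to 0$; this $q$ is independent of the configuration and of $n$, so $\E(|Z_n|\mid Z_0)\leq q^n|Z_0|$, giving $\sum_n\P(Z_n\neq\emptyset)\leq\sum_n\E(|Z_n|)\leq |Z_0|/(1-q)<\infty$ and hence a.s.\ extinction.

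For the logarithmic bound on $\E(N_{\mathrm{ext}}\mid Z_0)$ I would split at a deterministic time. Write $N_{\mathrm{ext}}=\sum_{n\geq 1}\uno{Z_n\neq\emptyset}$ and choose $n_0$ of order $\log|Z_0|$, say $n_0=\lceil \log_{1/q}(2|Z_0|)\rceil$, so that by Markov's inequality $\P(Z_{n_0}\neq\emptyset)\leq q^{n_0}|Z_0|\leq \tfrac12$. The contribution of the first $n_0$ steps is at most $n_0\leq \beta_0\log|Z_0|+C$. For the tail, I would use the Markov property of $Z$: conditionally on $Z_{n_0}$, the process restarts, and on the event $\{Z_{n_0}\neq\emptyset\}$ we have the crude bound $\E(N_{\mathrm{ext}}-n_0\mid Z_{n_0})\leq \E(N'_{\mathrm{ext}}\mid Z'_0=Z_{n_0})$ where $N'$ is extinction time of a fresh copy. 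By a Lyapunov/supermartingale argument — $|Z_n|$ is a supermartingale with multiplicative drift $q$, so $\E(\text{number of }n\text{ with }Z_n\neq\emptyset\mid Z_0)\leq c\,\E(\log^+|Z_0|)+c'$ can be obtained, or even more simply: on $\{Z_n\neq\emptyset\}$ we have $|Z_n|\geq 1$, and since $|Z_n|$ drops at geometric rate in expectation, a standard first-moment-plus-restart argument shows $\E(N_{\mathrm{ext}}\mid Z_0=\{k\})\leq c$ uniformly in $k$. Combining, $\E(N_{\mathrm{ext}}\mid Z_0)\leq n_0+\P(Z_{n_0}\neq\emptyset)\sup_{|W|\leq|Z_0|}\E(N_{\mathrm{ext}}\mid Z_0=W)$; iterating this self-bounding inequality (each iteration halves the surviving probability and adds $O(\log|Z_0|)$) yields $\E(N_{\mathrm{ext}}\mid Z_0)\leq \beta_0\log|Z_0|+\beta_1$.

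The step I expect to be the main obstacle is making the "reach into the next level" estimate genuinely uniform — i.e.\ establishing $\E(|Z_{n+1}|\mid Z_n)\leq q|Z_n|$ with $q<1$ and, crucially, with the \emph{same} $q$ regardless of how spread out or clustered $Z_n$ is. The subtlety is that the rule generating $Z_{n+1}$ from $Z_n$ is not a simple site-by-site map: a long run of open $V'$ variables in row $n$ lets the infection travel horizontally and then jump up, so a single wet site can in principle wet many sites via horizontal spreading, and two nearby wet sites can reinforce each other through shared open $V'$-runs. The fix is to assign to each level-$(n+1)$ wet site a canonical level-$n$ "ancestor" (e.g.\ the nearest one along the opening path) and to bound, for each fixed level-$n$ site $\ell\in Z_n$, the expected number of level-$(n+1)$ sites having $\ell$ as ancestor by $\sum_{m\geq 0}(2m+1)(C\epsilon)^{\max(1,m)}$; since distinct ancestors' open-path requirements are determined by disjoint (or at worst overlapping in a summable way) sets of $V'$ variables, a union bound over ancestors survives and gives the desired contraction once $\epsilon$ is small. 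Once this one-step contraction is in hand, everything else is routine geometric-series and restart bookkeeping.
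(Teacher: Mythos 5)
Your approach is correct and takes a genuinely different route from the paper. The paper proceeds via the FKG inequality: since the events $\{$no $H$-valid path from $(a,0)$ to level $n\}$ are decreasing in the $U'$ and $V'$ variables, FKG gives $F_A(n)\geq F_{\{0\}}(n)^{|A|}$, and then a probability-generating-function iteration (combined with $\gamma:=\E(|Z_1|\mid Z_0=\{0\})<1$) yields $1-F_{\{0\}}(n)\leq\gamma^n$. Your argument instead works with first moments directly: by subadditivity of the one-step reachable set and translation invariance, $\E(|Z_{n+1}|\mid Z_n)\leq\gamma|Z_n|$ with the \emph{same} $\gamma$, hence $\E(|Z_n|\mid Z_0)\leq\gamma^n|Z_0|$ and $\P(Z_n\neq\emptyset\mid Z_0)\leq\min(1,\gamma^n|Z_0|)$. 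Both routes hinge on the identical key estimate $\gamma<1$ for $\epsilon$ small, which you and the paper obtain in essentially the same way (bounding the size of the interval $Z_1$, which has a geometric tail in the $V'$ run length); the difference is only in how the single-site moment is propagated across levels. Your route avoids FKG and is more elementary, and the resulting bound on $\P(Z_n\neq\emptyset)$ is essentially $|Z_0|\gamma^n$ truncated at $1$, matching the paper's $1-(1-\gamma^n)^{|Z_0|}$.

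Two remarks on the details. First, the "main obstacle" you anticipate, namely reinforcement between nearby wet sites and the need for a canonical ancestor map, does not actually arise: the set of level-$(n+1)$ sites reachable from $Z_n\times\{n\}$ is contained in the union over $j\in Z_n$ of the sets reachable from $(j,n)$ alone, so $|Z_{n+1}|\leq\sum_{j\in Z_n}|Z_1^{(j)}|$ by plain subadditivity and the contraction $\E(|Z_{n+1}|\mid Z_n)\leq\gamma|Z_n|$ follows by translation invariance with no bookkeeping about shared $V'$ runs. Second, the restart/self-bounding iteration you sketch for the logarithmic bound has a genuine gap: you invoke $\sup_{|W|\leq|Z_0|}\E(N_\text{ext}\mid Z_0=W)$, but $|Z_{n_0}|$ is \emph{not} bounded by $|Z_0|$ (the process can grow before dying), so that supremum is over the wrong set, and without an a priori finite bound on the unrestricted supremum the iteration does not close. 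Fortunately the detour is unnecessary: once you have $\P(Z_n\neq\emptyset\mid Z_0)\leq\min(1,\gamma^n|Z_0|)$, just sum,
\[
\E(N_\text{ext}\mid Z_0)=\sum_{n\geq 0}\P(Z_n\neq\emptyset\mid Z_0)\leq\lceil\log_{1/\gamma}|Z_0|\rceil+\sum_{k\geq 0}\gamma^k=\lceil\log_{1/\gamma}|Z_0|\rceil+\tfrac{1}{1-\gamma},
\]
which gives $\beta_0\log|Z_0|+\beta_1$ at once. With this cleanup your argument is correct and arguably shorter than the paper's.
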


\begin{proof}
	Consider i.i.d. uniform random variables $u'_{k,n}$ and $v'_{\{k,k+1\},n}$ on $[0,1]$ and, for some given $\epsilon>0$, let $U'_{k,n}=1$ and $V'_{\{k,k+1\},n}=1$ if $u'_{k,n}\leq\epsilon$ and $v'_{\{k,k+1\},n}\leq\epsilon$ (this serves the usual purpose of coupling realizations of these variables for different values of $\epsilon$).
	 Fix then some value of $\epsilon$ and define for any finite set $A\subseteq Z$ the function $F_{A}(n)=\P(N_\text{ext}\leq n\,|\,Z_0=A)$. Conditioned on $Z_0=A$, we have $N_\text{ext}\leq n$ if and only if the event
	\[\bigcap_{a\in A}\big\{\text{there are no $H$-valid paths from $(a,0)$ to $\Z\times\{n\}$}\big\}\]
	occurs.
	Each of the events in this intersection is decreasing in the $U'$ and $V'$ variables so we can apply the FKG inequality to obtain
	\begin{align*}
		F_{A}(n)&\geq \prod_{a\in A}\P(\text{there are no $H$-valid paths from $(a,0)$ to $\Z\times\{n\}$})\\
		&=\P(\text{there are no $H$-valid paths from $(0,0)$ to $\Z\times\{n\}$})^{|A|}
		=\big(F_{\{0\}}(n)\big)^{|A|},
	\end{align*}
	where we have used translation invariance.
	Take now $g(x,\epsilon)$ to be the probability generating function of the variable $|Z_1|$ conditioned on $Z_0=\{0\}$, that is,
	\[g(x,\epsilon)\;=\;\sum_{k=0}^\infty \P(|Z_1|=k\,|\,Z_0=\{0\})\,x^{k},\]
	and use the inequality above along with total probabilities and the Markov property to deduce
	\begin{align*}
		F_{\{0\}}(n+1)&=\sum_{\substack{B\subseteq\Z \\ B\text{ finite}}}\P(N_\text{ext}\leq n+1,\,Z_{1}=B\,|\,Z_0=\{0\})\\
		&=\sum_{\substack{B\subseteq\Z \\ B\text{ finite}}}\P(N_\text{ext}\leq n,\,|\,Z_{0}=B)\P(Z_1=B\,|\,Z_0=\{0\})\\
		&\geq \sum_{\substack{B\subseteq\Z \\ B\text{ finite}}}\big(F_{\{0\}}(n)\big)^{|B|}\P(Z_1=B\,|\,Z_0=\{0\})
		=g(F_{\{0\}}(n),\epsilon).
	\end{align*}
	Since $g(\cdot,\epsilon)$ is monotone, we get
	\[F_{\{0\}}(n)\;\geq\;g^{(n)}(F_{\{0\}}(0),\epsilon)\;=\;g^{(n)}(0,\epsilon)\]
	where $g^{(n)}(\cdot,\epsilon)$ is defined inductively as $g^{(n+1)}(x,\epsilon)=g(g^{(n)}(x,\epsilon),\epsilon)$. Now for any fixed $\epsilon$ we have that for all $x\in(0,1)$,
	\[\frac{1-g(x,\epsilon)}{1-x}\;=\;\sum_{k=1}^{\infty}\P(|Z_1|=k\,|\,Z_0=\{0\})\frac{1-x^k}{1-x}\;\leq\;\E(|Z_1|\,\big|\,Z_0=\{0\}),\]
	so that $1-g^{(n+1)}(0,\epsilon)\leq \E(|Z_1|\,\big|\,Z_0=\{0\})[1-g^{(n)}(0,\epsilon)]$ and hence we deduce inductively that
	\[1-F_{\{0\}}(n)\,\leq\,1-g^{(n)}(0,\epsilon)\,\leq\,\E(|Z_1|\,\big|\,Z_0=\{0\})^n.\]
	From the construction of the $U'$ and $V'$ variables and the fact that $Z$ is increasing in them, it follows that $|Z_1|$ decreases to zero a.s. as $\epsilon\to 0$, so if $\E(|Z_1|\,\big|\,Z_0=\{0\})<\infty$ monotone convergence shows that for small enough $\epsilon$
	\begin{equation*}\label{eq:Ebd}
	\gamma:=\E(|Z_1|\,\big|\,Z_0=\{0\})<1,
	\end{equation*}
	from which we conclude $F_{\{0\}}(n)\longrightarrow1$.
	We deduce that for any finite $B\subseteq\Z$
	\[\P(N_\text{ext}=\infty\,|\,Z_{0}=B)\,=\,\lim_{n\to\infty}(1-F_{B}(n))\,\leq\,\lim_{n\to\infty}(1-F_{\{0\}}(n)^{|B|})\,=\,0,\]
	and hence that $Z$ dies out almost surely.
	Furthermore, for such small $\epsilon$ an easy calculation gives
	\begin{align*}
		\E(N_\text{ext}\,|\,Z_0=B)&=\sum_{k=1}^\infty\big(1-F_{B}(k)\big)\;\leq\;\sum_{k=1}^\infty\big(1-(1-\gamma^k)^{|B|}\big)\\
		&\leq\;-\log_{\gamma}(|B|)+1\,+\,\sum_{k=\lceil-\log_{\gamma}(|B|)\rceil}^\infty\big(1-(1-\gamma^k)^{|B|}\big),
	\end{align*}
	and if $|B|$ is large, using the inequality $1-x\geq e^{-2x}$ for small $x$ the sum on the right can be bounded by $\sum_{k=\lceil-\log_{\gamma}(|B|)\rceil}^\infty 2|B|\gamma^k\;\leq\;\frac{2|B|}{1-\gamma}\gamma^{-\log_{\gamma}(|B|)}\,=\,\frac{2}{1-\gamma}$,
	giving the required bound on the expectation.

	It only remains to prove $\E(|Z_1|\,\big|\,Z_0=\{0\})<\infty$. To this end observe that given $Z_0=\{0\}$, $Z_1$ is either empty or an interval containing $0$, and in the latter case, this interval is given as $[e^-_l,e^+_r]$ where $e_l$ and $e_r$ are the first edges to the left and right of $0$, respectively, such that $V'_{e,0}=0$. Fixing $\epsilon$ as before, it follows that for each $k\geq 4$ there are $k-2$ possible such intervals of length $k$, each one occurring with probability $\epsilon^{k-3}(1-\epsilon)^2$, and hence the sum $\sum_{k=0}^\infty k\P(|Z_1|=k\,|\,Z_0=\{0\})$ is finite. 
\end{proof}

Thanks to the lemma, all that remains to prove is that for any $\lambda>0$, $p\in(0,1)$ and $\epsilon>0$, we can choose $v$, $T$ and $r_0$ in such a way that we can couple our $U$ and $V$ variables with an i.i.d. Ber$(\epsilon)$ family. To this end take $v=1/T$ and fix $\delta_0=\frac{e^{-1}+(1-p)(1-e^{-1})-e^{-p}}{1-e^{-p}}e^{-p}\in(0,1)$, which, as we will show later with the aid of \eqref{ineqv}, is a lower bound for the probability of an edge being $n$-closed for this choice of $v$. Next, take $r_0\in\N$ large enough so that $(1-\delta_0)^{r_0}<\epsilon$, and choose $T>0$ to be sufficiently large so that, letting $\tau_\text{ext}$ be the extinction time of the standatd contact process running on the interval $[0,2r_0]$ starting with every vertex infected, we have $\P\big(\tau_\text{ext}\geq T\big)<\epsilon$.

Now, observe that all the $V$ variables depend only on $\zeta$, and that given a realization of the environment the $U$ variables depend on the $\mI$ and $\mR$ processes on disjoint sets, so they are independent. Furthermore, each block $B_{k,n}$ has length at most $2r_0-1$ so our choice of $T$ yields $\P(U_{k,n}=1\;|\;\zeta)<\epsilon$ as desired for any realization of $\zeta$. 
On the other hand, each $V_{\{k,k+1\},n}$ depends on the $\mU$ process and the $\mC$ variables on $B_{k,n}$, so $V_{e,n}$ and $V_{e',m}$ are independent as long as $e\neq e'$. So all that remains to prove now is that
\begin{equation*}
\label{desiV}
\P\big(V_{e,n}=1\;\big|\;V_{e,n-1},\cdots,V_{e,0}\big)\leq\epsilon
\end{equation*}
for any $e\in E$ and $n\in\N$. This inequality will follow from the next result, whose easy but tedious proof is deferred until the end of this section.

\begin{proposition}
	\label{propdesiV}
	Fix $v,p$ and $T$. For given $e\in E$ define $w^e_n={\bf 1}_{\text{$\zeta_t(e)=1$ for some $t\in[nT,(n+1)T)$}}$. Then for all $n\geq 1$,
	\begin{equation}
	\label{ineqv}
	\P(w^e_n=0\,|\,w^e_{n-1},\dotsc,w^e_0)\geq\delta\coloneqq e^{-pvT}\!\left[\tfrac{e^{-vT}+(1-p)(1-e^{-vT})-e^{-pvT}}{1-e^{-pvT}}\right],
	\end{equation}
	and for $n=0$ we have $\P(w^e_0)\geq\delta$.
\end{proposition}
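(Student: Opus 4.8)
The plan is to reduce \eqref{ineqv} to two elementary computations for the two-state continuous-time Markov chain $t\mapsto\zeta_t(e)$, which jumps $0\to1$ at rate $vp$, jumps $1\to0$ at rate $v(1-p)$, and starts from its stationary law $\mathrm{Ber}(p)$. Write $u=e^{-vT}$ and $\tilde u=e^{-pvT}$; simplifying the numerator, the constant in \eqref{ineqv} equals $\delta=\tilde u\,q$ with $q\coloneqq\frac{(1-p)+pu-\tilde u}{1-\tilde u}$. The first step is to peel off the last time block: conditionally on the history of $\zeta(e)$ up to time $nT$, the event $\{w^e_n=0\}$ occurs exactly when $\zeta_{nT}(e)=0$ and $\zeta(e)$ makes no jump during $[nT,(n+1)T)$, and since the holding time in state $0$ is exponential of rate $vp$, the Markov property gives that this conditional probability equals $e^{-pvT}\,\uno{\{\zeta_{nT}(e)=0\}}$ almost surely. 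Taking conditional expectation with respect to the coarser $\sigma$-algebra generated by $w^e_0,\dots,w^e_{n-1}$, the claim for $n\ge1$ reduces to $\P\big(\zeta_{nT}(e)=0\mid w^e_{n-1},\dots,w^e_0\big)\ge q$, and the case $n=0$ reduces to $\P(\zeta_0(e)=0)=1-p\ge q$, which holds because $q\le 1-p$ is equivalent to $\tilde u\ge u$, i.e.\ to $p\le1$.

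For $n\ge1$ I would then condition additionally on $\zeta_{(n-1)T}(e)$. By the Markov property at time $(n-1)T$, given $\zeta_{(n-1)T}(e)=a$ the path of $\zeta(e)$ on $[(n-1)T,\infty)$ is independent of $w^e_0,\dots,w^e_{n-2}$, so $\P\big(\zeta_{nT}(e)=0\mid \zeta_{(n-1)T}(e)=a,\,w^e_{n-1},\dots,w^e_0\big)$ depends only on $a$ and on the value of $w^e_{n-1}$; the quantity we want is thus a convex combination over $a\in\{0,1\}$ and it suffices to bound each term. If $w^e_{n-1}=0$ then $\zeta(e)\equiv0$ on all of $[(n-1)T,nT)$, so $\zeta_{nT}(e)=0$ almost surely and the term equals $1\ge q$. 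This leaves $w^e_{n-1}=1$, where, after translating to the interval $[0,T]$, we must bound from below $r(a)\coloneqq\P\big(\zeta_T(e)=0\mid\zeta_0(e)=a,\ \exists\,t\in[0,T):\zeta_t(e)=1\big)$ for $a\in\{0,1\}$.

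The two remaining computations are short, and I would carry them out as follows. For $a=1$ the conditioning event is automatic, so $r(1)=\P(\zeta_T(e)=0\mid\zeta_0(e)=1)=(1-p)(1-u)$, a standard two-state transition probability. For $a=0$, let $J$ be exponential of rate $vp$, the first time $\zeta(e)$ switches to $1$; then $\{\exists\,t\in[0,T):\zeta_t(e)=1\}=\{J<T\}$, and conditioning on $J=j<T$ together with the Markov property at time $j$ (at which $\zeta_j(e)=1$) gives $\P(\zeta_T(e)=0\mid\zeta_0(e)=0,\,J=j)=(1-p)(1-e^{-v(T-j)})$. Hence $r(0)=\frac{1}{1-\tilde u}\int_0^T vp\,e^{-vpj}\,(1-p)(1-e^{-v(T-j)})\,dj$, and evaluating this elementary integral yields $r(0)=\frac{(1-p)(1-\tilde u)-p(\tilde u-u)}{1-\tilde u}=q$ exactly. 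It then remains to check $r(1)\ge q$, which rearranges to $p+(1-p)u\ge e^{-(1-p)vT}$; this is precisely the convexity of $s\mapsto e^{-vTs}$ evaluated at $s=1-p$ (the same convexity at $s=p$ gives $q\ge0$, and $q\le1$ is immediate). Since the convex combination of $r(0)=q$ and $r(1)\ge q$ is at least $q$, the proof is complete.

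I expect the only real work to be the integral for $r(0)$, and---more than that---the bookkeeping of which $\sigma$-algebras the events $\{w^e_k=\cdot\}$, $\{\zeta_{nT}(e)=0\}$ and the jump-time variable $J$ are measurable with respect to, so that every invocation of the Markov property is legitimate. Conceptually, the proposition is pinning down that the worst case for $\P(\zeta_{nT}(e)=0\mid\text{past})$ is the configuration in which, at time $(n-1)T$, the edge sits in state $0$ yet is known to have been open at some earlier instant of the previous block; $q=r(0)$ is exactly that worst-case probability, so the bound $\delta=e^{-pvT}q$ loses nothing.
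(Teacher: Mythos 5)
Your proof is correct and follows essentially the same route as the paper's: peel off the last block via $\P(w^e_n=0\mid\mathcal F_{nT})=e^{-pvT}\uno{\{\zeta_{nT}(e)=0\}}$, reduce to bounding $\P(\zeta_{nT}(e)=0\mid\cdot)$, and case-split on the pair $(\zeta_{(n-1)T}(e),w^e_{n-1})$, with the worst case $(0,1)$ giving equality. The only cosmetic differences are that you evaluate $r(0)$ by integrating over the first jump time rather than summing over the number of updating events, and you phrase the inequality $r(1)\ge q$ (and $q\ge 0$, $q\le 1-p$) directly as convexity of $s\mapsto e^{-vTs}$ rather than via the function $x\mapsto e^{-xvT}+x(1-e^{-vT})-1$; both are equivalent and equally short.
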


\noindent Notice that replacing $v=1/T$ in the definition of $\delta$ we recover $\delta_0$ and using that
$$V_{\{k,k+1\},n}=1\;\Longleftrightarrow\;\mbox{for all edges in }[kr_0,(k+1)r_0]\mbox{ we have }w_n^e=1,$$
Proposition \ref{propdesiV}, together with the independence of the $\zeta_t$ processes on different edges, imply that
$$\P\big(V_{e,m}=1\;\big|\;V_{e,m-1},\cdots,V_{e,0}\big)\;\leq\;(1-\delta_0)^{r_0}\leq \epsilon.$$
This completes the proof of the theorem.

\subsection{Proof of Theorem \ref{teo2}}

The proof of this theorem follows closely what was done in the proof of Theorem \ref{teo1}, where for fixed $p$ and $\lambda$ we were able to couple the CPDE running on $\Z$ with a process $Z$ which dies out provided that $v$ was sufficiently small. Surprisingly enough, when taking $v$ fixed and $p$ small instead, it is possible to adapt the proof to obtain not only that the result holds for general $\G$, but also that the coupled process $Z$ dies out {\it independently of the value of $\lambda$}. Indeed, we will show that if $p$ is sufficiently small, then we can divide time into intervals of the form $[nT,(n+1)T)$ where every edge has a very large probability of being $n$-closed (as opposed to our previous proof, where we could only find some $n$-closed edge within a large enough interval $B_{k,n}$). In particular, most vertices will be isolated throughout these intervals and as a result, infections confined to these quarantined vertices die out in time of order 1.

Fix $\lambda>0$, $v>0$, and take $\varepsilon>0$ small (to be fixed later), which we use to define an auxiliary parameter $M=2\max\{\varepsilon^{-1},v\log(\varepsilon^{-1})\}$.
Next choose $p$ small enough so that
\begin{equation}
\label{ineq2.2}
e^{-pM}\left[\tfrac{e^{-M}+(1-p)(1-e^{-M})-e^{-pM}}{1-e^{-pM}}\right]\geq1-\varepsilon
\end{equation}
(which is possible by our choice of $M$ because the expression on the left goes to $1-\frac{1-e^{-M}}{M}\geq1-\frac{1}{M}$ as $p\rightarrow0$). Next, partition $\R^+$ into time intervals of the form $[nT,(n+1)T)$, where $T=M/v$, and define the variables $\{V_{e,n}\}$ and $\{U_{x,n}\}$ for $x\in G$, $e\in E$ and $n\in\N$ as
\begin{align*}
	V_{e,n}&={\bf 1}_{\zeta_t(e)=1\text{ for some }\;t\in[nT,(n+1)T)},\\
	U_{x,n}&={\bf 1}_{\mathcal{R}^{x}\cap[nT,(n+1)T)=\emptyset}.
\end{align*}

Observe that if $V_{e,n}=0$ then the infection cannot use $e$ to spread throughout $[nT,(n+1)T)$ and hence if $\sum_{y\sim x}V_{\{x,y\},n}=0$ we deduce that $x$ is isolated during this time period. We will show that this is very likely to happen, and in that scenario, $U_{x,n}=0$ implies that any infection at $x$ dies out before time $(n+1)T$. Using these variables, which do not depend on the infection processes, we construct a graph $H$ with vertex set $G\times\N$ and whose edge set is obtained from the $U$ and $V$ variables according to the following rules:
\begin{enumerate}
	\item If $U_{x,n}=1$, then add the edge between $(x,n)$ and $(x,n+1)$.
	\item For $e=\{x,y\}$, if $V_{e,n}=1$, then add edges as if $U_{x,n}=1$ and $U_{y,n}=1$, and also the edge between $(x,n)$ and $(y,n)$.
\end{enumerate}
The resulting graph is similar to the one defined in the proof of Theorem \ref{teo1}, and in terms of it we define $H$-valid paths and the process $Z$ analogously to what was done there. It can be checked that for this new process $Z$ analogous versions of Proposition \ref{Zboundseta} and Lemma \ref{zdies} hold, so in order to conclude both statements of the theorem, it is enough to bound the $U$ and $V$ variables by an i.i.d. family of Bernoulli random variables with small enough parameter $\varepsilon$. Notice that the $U$ variables are independent from one another, and are also independent from the $V$ variables, which in turn are also independent among themselves whenever indexed by different edges.  As a result, all we need to show is that
$$\P\big(U_{x,n}=1\big)\leq\varepsilon\quad\text{ and }\quad\P\big(V_{e,n}=1\;\big|\;V_{e,n-1},\cdots,V_{e,0}\big)\leq\varepsilon$$ for any $x\in G$, $e\in E$ and $n\in\N$. For the first inequality, observe that $\P\big(U_{k,n}=1\big)= e^{-T}=e^{-M/v}\leq\varepsilon$ from our choice of $M$, while for the second inequality we can use Proposition \ref{propdesiV} directly with $w_n^e=V_{e,n}$ to obtain
$$\P\big(V_{e,n}=1\;\big|\;V_{e,n-1},\cdots,V_{e,0}\big)\leq 1-\delta$$
with $\delta$ as defined in \eqref{ineqv}. Replacing $T=M/v$ we see that $\delta$ is equal to the expression on the left hand side of \eqref{ineq2.2}, and the desired inequality follows. We conclude that $Z$ (and hence the CPDE) dies out, and the result then follows from noticing that the construction does not depend on $\lambda$, so that $\lambda_0(v,p)=\infty$.
 
 \subsection{Proof of Theorem \ref{teo3}}
 
 Our goal is to prove that if $p$ is sufficiently close to $1$, then for every $v>0$ we can take $\lambda$ large enough so that the infection process $\eta$ survives. To this end we use again a block construction argument, this time based on the usual comparison with oriented percolation as introduced in \cite{orientedperc}.
 As mentioned in Section \ref{sec:genG} it is enough to prove this result in the case $\G=\Z$, so we make this assumption.
 
 \hskip-12pt
 \begin{minipage}{0.7\textwidth}
 	\mbox{\hspace*{7.5pt}} For any $T>0$ we divide $\Z\times[0,\infty)$ into blocks $B_{k,n}$ the form
 	\smash{$B_{k,n}=\mathfrak{I}_{k,n}\times[nT,(n+1)T)$} with \smash{$\mathfrak{I}_{k,n}=\big\{4k-2n,\dotsc,4k-2n+3\big\}.$}
 	Note that, with this choice, half of each block lies on top of each of the two adjacent blocks in the row below it, as shown in the picture. For each $k$ and $n$ we say that the block $B_{k,n}$ is ``good", an event which we denote as $\mathcal{W}_{k,n}$, if the following conditions hold:
 \end{minipage}%
 \hskip10pt
 \begin{minipage}{0.27\textwidth}
 	\vskip-10pt
 	\begin{center}
 		\begin{tikzpicture}[line cap=round,line join=round,>=triangle 45,x=1.0cm,y=1.0cm]
 		\draw [line width=1.5pt,<->] (-2.2,0.0)-- (2.2,0.0);
 		\draw [line width=0.5pt,->,dotted] (0.0,0.0)node[xshift=-0.0cm,yshift=-0.3cm]{$0$}-- (0.0,2.8);
 		\foreach \a in {-2, -1, ..., 1 }
 		\draw [line width=2pt] (\a*0.8,0.0)node[xshift=0.4cm,yshift=0.375cm]{{\footnotesize $B_{\a,0}$}} rectangle (\a*0.8+0.8,0.8);
 		\foreach \a in {-1, 0, ..., 1 }
 		\draw [line width=2pt] (\a*0.8-0.4,0.8)node[xshift=0.4cm,yshift=0.375cm]{{\footnotesize $B_{\a,1}$}} rectangle (\a*0.8+0.4,1.6);
 		\foreach \a in {-1, 0, ..., 2 }
 		\draw [line width=2pt] (\a*0.8-0.8,1.6)node[xshift=0.4cm,yshift=0.375cm]{{\footnotesize $B_{\a,2}$}} rectangle (\a*0.8,2.4);
 		\end{tikzpicture}
 	\end{center}
 \end{minipage}
 
 \begin{enumerate}[label=\normalfont{(c\arabic*)}]
 	\item \label{c1} For each edge $e$ lying inside $\mathfrak{I}_{k,n}$ we have $\mathcal{O}^{e}\cap[nT,(n+1)T)\neq\emptyset$.
 	\item \label{c2} For each edge $e$ lying inside $\mathfrak{I}_{k,n}$ we have $\mathcal{C}^{e}\cap[nT,(n+1)T)=\emptyset$.
 	\item \label{c3} Let $T_{k,n}=\bigcup_{x,e\text{ inside }\mathfrak{I}_{k,n}}(\mathcal{R}^{x}\cup\mathcal{C}^{e}\cup\mathcal{O}^{e})\cap[nT,(n+1)T)$. Then $|t_1-t_2|>\delta$ for all $t_1,t_2\in T_{k,n}\cup\{nT,(n+1)T\}$.
 	\item \label{c4} For all edge $e$ lying inside $\mathfrak{I}_{k,n}$ we have $\mathcal{I}^e\cap\left[nT+\frac{lT\delta}{6},\;nT+\frac{(l+1)T\delta}{6}\right]\neq\emptyset$ for all $0\leq l<\frac{6}{\delta}$.
 \end{enumerate}

 In words, conditions \ref{c1} and \ref{c2} say that all edges lying inside $\mathfrak{I}_{k,n}$ become available at some time in $[nT,(n+1)T)$ and remain so until time $(n+1)T$, while conditions \ref{c3} and \ref{c4}, on the other hand, ensure that between two non-infection events there is a (not necessarily valid) infection between each pair of neighbouring vertices in $\mathfrak{I}_{k,n}$. 
 
 Next we choose the parameters of our model and our block construction. Fix $\epsilon>0$ and take $M>0$ large enough so that $e^{-M/2}\leq\epsilon/12$ and then $p$ close enough to $1$ so that $1-e^{-M(1-p)}\leq\epsilon/12$ (this will determine the value of the parameter $p_1$ which we are looking for in Theorem \ref{teo3}). Now fix $v>0$ and take $T=M/v$. Focusing on a single block $B_{k,n}$ we have that: $\P\text{(c1)}=(1-e^{-Mp})^3>1-\frac{\epsilon}{4}$ from our choice of $M$; $\P\text{(c2)}=e^{-3(1-p)M}>1-\frac{\epsilon}{4}$ from our choice of $p$; $\P\text{(c3)}\nearrow1$ as $\delta\searrow0$, so we can choose $\delta>0$ small enough such that $\P\text{(c3)}\geq1-\epsilon/4$; and having fixed $\delta$, $\P\text{(c4)}\nearrow1$ as $\lambda\nearrow\infty$, so we can take $\lambda$ large enough such that $\P\text{(c4)}\geq1-\epsilon/4$.
 From this choice of the parameters we conclude that $\P(\mathcal{W}_{k,n})\geq 1-\varepsilon$ for each $k$ and $n$.
 
 Take now the directed graph with vertex set $\{B_{k,n}\}_{k\in\Z,n\in\N}$ and where each block $B_{k,n}$ has $B_{k,n+1}$ and $B_{k+1,n+1}$ as directed neighbors. Since all of the events $\mathcal{W}_{k,n}$ are independent, by taking the subgraph of all the blocks satisfying these events we recover the two-dimensional site percolation model of \cite{durrett1980} with percolation parameter at least $1-\varepsilon$. By choosing $\varepsilon$ sufficiently small we deduce that with positive probability there is an infinite path $(B_{k_n,n})_{n\in\N}$ starting at $B_{0,0}$ (which, from our construction of the network satisfies $k_{n+1}\in\{k_n,k_n+1\}$ for each $n$). Observe that by choosing $\varepsilon$ we fix the value of $p=p_1$, while the parameter $v$ only determines $T$. We claim that on the event where this infinite path exists, we obtain survival of $\eta$ as soon as $\mathfrak{I}_{0,0}$ contains two adjacent vertices $x_0,y_0$ such that $\eta_{0}(x_0)=\eta_0(y_0)=\zeta_0(\{x_0,y_0\})=1$.
 
 To prove this claim, observe that conditions \ref{c1}-\ref{c4} imply that at time $T$, $\eta_T(x)=1$ and $\zeta_T(e)=1$ for each vertex $x$ and edge $e$ in $\mathfrak{I}_{0,0}$. Indeed, from condition \ref{c1}, if an edge is absent in $\mathfrak{I}_{0,0}$ at time $0$, then it appears at some point in the time interval $[0,T)$. On the other hand, from condition \ref{c2} no edge can disappear in this interval.
 We deduce that the edge $\{x_0,y_0\}$ remains available throughout $[0,T)$ and that all edges are available at time $T$, giving $\zeta_{T}\equiv 1$ inside $\mathfrak{I}_{0,0}$.
 
 \hskip-12pt
 \begin{minipage}{0.7\textwidth}
 	\vskip4pt
 	\hskip12pt Furthermore, observe that from condition \ref{c3} we can actually deduce that $\zeta_{T-\delta}\equiv1$ inside $\mathfrak{I}_{0,0}$, since there are no updating or recovery events in $[T-\delta,T]$.
 	To deduce the analogous result for $\eta$ enumerate the recovery events of $x_0$ and $y_0$ as $r_1,r_2,\dotsc$.
 	Conditions \ref{c3} and \ref{c4} imply that there is always an infection event between these vertices at each interval $(r_{j},r_{j+1})$ which is valid since the edge $\{x,y\}$ is available at all times. In particular, we deduce that at time $T-\delta$ either $x$ or $y$ (or both) are infected, but in the time interval $[T-\delta,T)$ there are no infection or recovery events and all edges are available, so from condition \ref{c4} we can easily obtain the existence of valid infection paths from $x$ and $y$ to all sites in $\mathfrak{I}_{0,0}$.
 \end{minipage}\hskip0.6in
 \begin{minipage}{0.15\textwidth}
 	\begin{center}
 		\vskip0.04in
 		\begin{tikzpicture}[line cap=round,line join=round,>=triangle 45,x=1.2cm,y=1.5cm]
 		\fill [gray] (-0.75,0.0) rectangle (-0.25,1.4);
 		\fill [gray] (-0.25,0.0) rectangle (0.25,0.83);
 		\draw [line width=1.5pt] (-0.75,0.0) rectangle (0.75,3.0);
 		\draw [line width=1pt] (-0.25,0.0) -- (-0.25,3.0);
 		\draw [line width=1pt] (0.25,0.0) -- (0.25,3.0);
 		\draw [red, line width=1.6pt] (0.25,0.0) -- (0.25,0.27);
 		\draw [red, line width=1.6pt] (0.75,0.0) -- (0.75,0.44);
 		\draw [red, line width=1.6pt] (0.25,0.15) -- (0.75,0.15);
 		\draw [red, line width=1.6pt] (0.25,0.4) -- (0.75,0.4);
 		\draw [red, line width=1.6pt] (0.25,0.4) -- (0.25,0.59);
 		\draw [red, line width=1.6pt] (0.25,0.54) -- (0.75,0.54);
 		\draw [red, line width=1.6pt] (0.75,0.54) -- (0.75,0.69);
 		\draw [red, line width=1.6pt] (0.25,0.65) -- (0.75,0.65);
 		\draw [red, line width=1.6pt] (0.25,0.65) -- (0.25,0.88);
 		\draw [red, line width=1.6pt] (0.25,0.82) -- (0.75,0.82);
 		\draw [red, line width=1.6pt] (0.75,0.82) -- (0.75,1.02);
 		\draw [red, line width=1.6pt] (0.25,0.96) -- (0.75,0.96);
 		\draw [red, line width=1.6pt] (0.25,0.96) -- (0.25,1.3);
 		\draw [red, line width=1.6pt] (0.25,1.2) -- (0.75,1.2);
 		\draw [red, line width=1.6pt] (0.75,1.2) -- (0.75,1.5);
 		\draw [red, line width=1.6pt] (0.25,1.45) -- (0.75,1.45);
 		\draw [red, line width=1.6pt] (0.25,1.45) -- (0.25,1.7);
 		\draw [red, line width=1.6pt] (0.25,1.63) -- (0.75,1.63);
 		\draw [red, line width=1.6pt] (0.75,1.63) -- (0.75,1.94);
 		\draw [red, line width=1.6pt] (0.25,1.85) -- (0.75,1.85);
 		\draw [red, line width=1.6pt] (0.25,1.85) -- (0.25,2.06);
 		\draw [red, line width=1.6pt] (0.25,2.04) -- (0.75,2.04);
 		\draw [red, line width=1.6pt] (0.75,2.04) -- (0.75,3);
 		\draw [red, line width=1.6pt] (0.25,2.75) -- (0.75,2.75);
 		\draw [red, line width=1.6pt] (0.25,2.75) -- (0.25,3);
 		\draw [red, line width=1.6pt] (-0.25,2.84) -- (0.25,2.84);
 		\draw [red, line width=1.6pt] (-0.25,2.84) -- (-0.25,3);
 		\draw [red, line width=1.6pt] (-0.75,2.95) -- (-0.25,2.95);
 		\draw [red, line width=1.6pt] (-0.75,2.95) -- (-0.75,3);
 		\node at (0.25,0.27) [black,fontscale=2,thick]{\bf *};
 		\node at (0.25,0.59) [black,fontscale=2,thick]{\bf *};
 		\node at (0.25,0.88) [black,fontscale=2,thick]{\bf *};
 		\node at (0.25,1.3) [black,fontscale=2,thick]{\bf *};
 		\node at (0.25,1.7) [black,fontscale=2,thick]{\bf *};
 		\node at (0.25,2.06) [black,fontscale=2,thick]{\bf *};
 		\node at (0.25,2.54) [black,fontscale=2,thick]{\bf *};
 		\node at (0.75,0.44) [black,fontscale=2,thick]{\bf *};
 		\node at (0.75,0.69) [black,fontscale=2,thick]{\bf *};
 		\node at (0.75,1.02) [black,fontscale=2,thick]{\bf *};
 		\node at (0.75,1.5) [black,fontscale=2,thick]{\bf *};
 		\node at (0.75,1.94) [black,fontscale=2,thick]{\bf *};
 		\end{tikzpicture}
 	\end{center}
 \end{minipage}
 
 Now half of the block $B_{k_1,1}$ lies on top of $B_{0,0}$, so from the observation, $\mathfrak{I}_{k_1,1}$ contains two adjacent vertices $x_1,y_1$ such that $\eta_{T}(x_1)=\eta_T(y_1)=\zeta_T(\{x_1,y_1\})=1$, and we can repeat the argument above to conclude that at time $2T$,  $\eta_{2T}(x)=1$ and $\zeta_{2T}(e)=1$ for each vertex $x$ and edge $e$ in $\mathfrak{I}_{k_1,1}$. Repeating this argument iteratively we conclude that at each time $nT$ there are vertices $x,y\in\mathfrak{I}_{k_n,n}$ such that $\eta_{nT}(x)=\eta_{nT}(y)=1$, yielding survival of $\eta$.

 \subsection{Proof of Proposition \ref{propdesiV}}
 Recall that $w_n^e=0$ is equivalent to $\zeta_t(e)=0$ for all $t\in[nT,(n+1)T)$ and notice that $\P\big(w_n^e=0\big|\mathcal{F}_{nT}\big)=e^{-pvT}\mathds{1}_{\{\zeta_{nT}(e)=0\}}$, where $\mathcal{F}_{nT}$ is the $\sigma$-algebra generated by $\zeta$ up until time $nT$. Thus it is enough to show the improved inequality
 \begin{equation*}
 \label{nuevo}\P(\zeta_{nT}(e)=0|w_{n-1}^e,\zeta_{(n-1)T}(e),w_{n-2}^e,\dotsc,w_0^e)\;\geq\;\tfrac{e^{-vT}+(1-p)(1-e^{-vT})-e^{-vpT}}{1-e^{-vpT}}=e^{pvT}\delta\eqqcolon\delta'
 \end{equation*}
for $n\geq 1$ and $\P(\zeta_{0}(e)=0)=(1-p)\geq\delta'$ for $n=0$. Observing that the probability above is equal to $\P(\zeta_{T}(e)=0|w_{0}^e,\zeta_{0}(e))$ due to the Markov property and homogeneity of the environment, there are three cases to be considered when $n\geq1$.
 
Assume first that $\zeta_{0}(e)=1$, which by definition implies $w_0^e=1$. Then $\zeta_{T}(e)=0$ if and only if $\mathcal{U}\cap[0,T)\neq\emptyset$ and the last updating event is in $\mathcal{C}^e$. The probability of such an event is $(1-p)(1-e^{-vT})$, and hence we need to show that this expression is larger than $\delta'$. To do so, observe that after multiplying by $e^{vpT}-1$ and rearranging terms, the inequality $(1-p)(1-e^{-vT})\geq\delta'$ is equivalent to
 \[e^{-(1-p)vT}+(1-p)(1-e^{-vT})-1\,\leq\,0,\]
 which always holds, since the function $x\longmapsto e^{-xvT}+x(1-e^{-vT})-1$ is convex and equal to zero at $x=0$ and $x=1$. Observe that this bound also gives the result for $n=0$, since $\P(\zeta_0(e)=0)\geq(1-p)(1-e^{-vT})\geq\delta'$.
 
 Assume now that $\zeta_0(e)=0$ and $w_0^e=1$. Here we compute the conditional probability directly. For the numerator, it is easy to see that the event $\{\zeta_{T}(e)=0,\;w_0^e=1,\;\zeta_{0}(e)=0\}$ corresponds to $\zeta_{0}(e)=0$ and $|\mathcal{U}^e\cap[0,T)|\geq 2$ with the last updating event belonging to $\mathcal{C}^e$ and from the rest at least one belonging to $\mathcal{O}^e$. The probability of this event is $(1-p)^2\sum_{n=2}^\infty\left[1-(1-p)^{n-1}\right]e^{-vT}\frac{(vT)^n}{n!}=(1-p)[e^{-vT}+(1-p)(1-e^{-vT})-e^{-vpT}]$. For the denominator, the event $\{w_0^e=1,\;\zeta_{0}(e)=0\}$ corresponds to $\zeta_{0}(e)=0$ and $\mathcal{O}^e\cap[0,T)\neq\emptyset$, so it has probability $(1-p)(1-e^{-vpT})$. Dividing the two expressions we obtain $\delta'$.
 
 Finally, for the case $\zeta_0(e)=0$ and $w_0^e=0$, we have by definition of $w_0^e$ that $\zeta_{T}(e)=0$, so the conditional probability is equal to $1$, and the result follows.

\bigskip

\noindent{\bf Acknowledgements.} 
\enspace The authors thank two anonymous referees for their careful review and several suggestions on the first draft of this paper.
Both authors were supported by Programa Iniciativa Cient\'ifica Milenio grant number NC120062 through Nucleus Millenium Stochastic Models of Complex and Disordered Systems, and by Conicyt Basal-CMM Proyecto/Grant PAI AFB-170001. DR was also supported by Fondecyt Grant 1160174, while AL was supported by the CONICYT-PCHA/Doctorado nacional/2014-21141160 scholarship.

\smallskip

\providecommand{\bysame}{\leavevmode\hbox to3em{\hrulefill}\thinspace}
\providecommand{\MR}{\relax\ifhmode\unskip\space\fi MR }
\providecommand{\MRhref}[2]{%
  \href{http://www.ams.org/mathscinet-getitem?mr=#1}{#2}
}
\providecommand{\href}[2]{#2}



\end{document}